\documentclass[12pt,leqno]{amsart}

\topmargin -1.2cm \evensidemargin 0cm \oddsidemargin 0cm \textwidth 16cm \textheight 22cm
\usepackage{amscd}
\usepackage{amsthm}
\usepackage{color}
\usepackage{amssymb}
\usepackage{amsfonts}
\usepackage{latexsym}
\usepackage{verbatim}
\usepackage[utf8]{inputenc}

\theoremstyle{plain}
\newtheorem{proposition}{Proposition}

\newtheorem{theorem}[proposition]{Theorem}
\newtheorem{lemma}[proposition]{Lemma}
\newtheorem{corollary}[proposition]{Corollary}
\theoremstyle{definition}

\theoremstyle{remark}
\newtheorem{remark}{Remark}

\renewcommand{\b}{\begin{equation}}
\newcommand{\e}{\end{equation}}

\newcommand\R{{\mathbb R}}

\newcommand{\Gtwo}{\mathrm{G}_2}

\newcommand{\la}{\langle}
\newcommand{\ra}{\rangle}
\newcommand{\SU}{\mathrm{SU}}

\textheight=8in
\textwidth=6in
\oddsidemargin=0.25in
\evensidemargin=0.25in

\sloppy

 \title[Laplacian coflow on the Heisenberg group]{Laplacian coflow on the $7$-dimensional\\
 Heisenberg group}

\begin{document}

\author{Leonardo Bagaglini, Marisa Fern\'andez, Anna Fino}
\date{\today}
\subjclass[2000]{Primary 53C15; Secondary 53C44, 53C30}
\keywords{$\Gtwo$-structure, Laplacian coflow}
\address{Dipartimento di Matematica e Informatica \lq \lq Ulisse Dini\rq \rq \\ Universit\`a di Firenze\\
Viale Giovan Battista Morgagni, 67/A \\
50134 Firenze\\ Italy}
\email{leonardo.bagaglini@unifi.it}
 \address{Universidad del Pa\'{\i}s Vasco, Facultad de Ciencia y Tecnolog\'{\i}a, 
 Departamento de Matem\'aticas, Apartado 644, 48080 Bilbao, Spain}
  \email{marisa.fernandez@ehu.es}
\address{Dipartimento di Matematica \lq\lq Giuseppe Peano\rq\rq \\ Universit\`a di Torino\\
Via Carlo Alberto 10\\
10123 Torino\\ Italy}
 \email{annamaria.fino@unito.it}

\begin{abstract} 
We study  the  Laplacian coflow and the modified Laplacian coflow of $\Gtwo$-structures  
 on  the $7$-dimensional Heisenberg group. For the Laplacian coflow we show that  the solution is always ancient, 
that is it is  defined in some interval $(-\infty,T)$, with $0<T<+\infty$.
However, for the modified Laplacian coflow, we prove that in some cases the solution is defined only on a finite interval 
while in other cases the solution is 
ancient or eternal, that is it is defined on $(-\infty, \infty)$.
\end{abstract}

\maketitle

\section{Introduction}

A $7$-dimensional manifold $M$ carries a $\Gtwo$-structure if $M$ admits 
a globally defined 3-form $\varphi$, which is called $\Gtwo$ form,  that  can be described locally as
$$
\varphi = e^{127} + e^{347}+ e^{567} + e^{135} - e^{146} - e^{236} - e^{245},
$$
with respect to some local basis  $\{e^1, \ldots , e^7\}$  of the 1-forms  on $M$. Here, $e^{127}$ stands for 
$e^1\wedge e^2\wedge e^7$, and so on.
Such a 3-form $\varphi$ determines a Riemannian metric $g_{\varphi}$ and an orientation on $M$. 
 If $\nabla$ denotes  the Levi-Civita connection of $g_{\varphi}$, one  can view 
$\nabla \varphi$ as the torsion of the $\Gtwo$-structure $\varphi$. Thus, if  $\nabla \varphi=0$, which is equivalent
to $d \varphi =0$ and $d \star_{ \varphi} \varphi= 0$,  where $\star_{ \varphi}$ is the Hodge star operator with respect to 
$g_{\varphi}$, one says that  the $\Gtwo$-structure is torsion-free.

The  different  classses of $\Gtwo$-structures can be described in terms  of the exterior
derivatives $d\varphi$ and $d \star_{ \varphi} \varphi$ \cite{Bryant,FG}.  
If $d \varphi =0$,  then the $\Gtwo$-structure is called {\it closed} (or {\it calibrated} in the sense of Harvey and Lawson \cite{HL}) and 
if $\varphi$ is coclosed, that is if $\star_{ \varphi} \varphi$ is closed, then the $\Gtwo$-structure is called {\it coclosed} (or {\it cocalibrated} \cite{HL}).

 Since Hamilton introduced the Ricci flow in 1982  \cite{Hamilton}, 
geometric flows have been an important tool in studying geometric structures on manifolds.
The Laplacian flow for closed $\Gtwo$-structures on a 7-manifold $M$ has been introduced 
 by Bryant in \cite{Bryant}, and it  is given by 
 $$
  \left \{ \begin{array}{l} \frac{\partial} {\partial t} \varphi(t) = \Delta_{t}\,\varphi(t),\\[3pt]
d\,\varphi(t) =0,\\[3pt] 
\varphi(0) = \varphi,
\end{array} \right.
$$
where $\varphi(t)$ is a closed $\Gtwo$ form on $M$, $\Delta_{t} = d\,d^* + d^* d$  is the
Hodge Laplacian operator associated with the metric $g_{\varphi (t)}$ induced by the $3$-form $\varphi(t)$, and  $\varphi$
is the initial closed $\Gtwo$-structure.
A  short-time existence and uniqueness  for this flow, in the case of compact manifolds,
has been proved in  \cite{BF}. Regarding the long-time behavior of the Laplacian flow on compact manifolds $M$, 
 Lotay and Wei in \cite{LW2} have proved recently 
 that if the initial closed $\Gtwo$ form $\varphi$ is such that  its torsion is  sufficiently small (in a suitable sense), then
the Laplacian flow of $\varphi$ will exist for all time and converge to a torsion-free $\Gtwo$-structure.
Non-compact examples 
where the flow converges to a flat $\Gtwo$-structure have been given in \cite{FFM}.

Shi-type derivative estimates for the Riemann curvature tensor
 and torsion tensor  along the Laplacian  flow have been determined  in \cite{LW}, and 
 in \cite{Lotay} it is proved that for each fixed positive time $t \in (0,T]$, $(M,\varphi(t),g_{\varphi(t)})$ is real analytic. 
 Consequently, any Laplacian soliton is real analytic.
 Moreover, solitons of  the Laplacian flow of $\Gtwo$-structures  in the homogeneous case  have been studied  
recently by Lauret in \cite{Lauret3}  using the bracket flow and the algebraic soliton approach.

Some work has also been done on other related flows of $\Gtwo$-structures - such as the {\em Laplacian coflow}, or {\em flow}, for
coclosed $\Gtwo$-structures. This coflow has been originally proposed by Karigiannis,
McKay and Tsui in \cite{KMT} and, for an initial coclosed $\Gtwo$ form $\varphi$ with $\psi\,=\,\star_{\varphi} \varphi$, it is given by
\begin{equation} \label{co-flow} 
\frac{\partial} {\partial t} \psi(t) = -  \Delta_{t} \psi(t), \quad    d\psi(t)\,=\,0, \quad  \psi(0) = \psi,
\end{equation}
where $\psi(t)$ is the  Hodge dual 4-form of a $\Gtwo$-structure $\varphi(t)$, 
that is $\psi(t)\,=\, \star_t  \varphi(t)$,
$\Delta_{t}$ is the Hodge Laplacian operator with respect to the Remannian metric $g_{\varphi(t)}$.
This flow preserves the condition of the $\Gtwo$-structure being coclosed, that is $\psi(t)$ is closed for any $t$,  and it  
was studied in \cite{KMT} for two explicit examples of coclosed $\Gtwo$-structures with symmetry, namely for  
warped products of an interval, or a circle, with a compact 6-manifold $N$ which is taken to be either a nearly K\"ahler manifold or a Calabi-Yau manifold. 
Nevertheless, in \cite{Grigorian} it was shown that the coflow \eqref{co-flow} is not even a weakly parabolic flow, and 
that the symbol of the operator $\Delta_{t}$, acting on $4-$forms, has a
mixed signature. But no general result is known about the short time existence of the coflow \eqref{co-flow}.

A {\em modified Laplacian coflow} was introduced by Grigorian in \cite{Grigorian}
\begin{equation} \label{modifiedcoflow}
\frac{\partial }{\partial t} \psi(t)  = \Delta_{t}  \psi(t) + 2d \Big(\big(A- {\rm{Tr}_t}(\tau(t))\big)\varphi(t)\Big), \quad    d\psi(t)\,=\,0, \quad \psi(0) = \psi,
\end{equation}
where ${\rm{Tr}_t}(\tau(t))$ is the trace of the full torsion tensor $\tau(t)$ of the $\Gtwo$-structure defined by $\varphi(t)$, and $A$ is a
fixed positive constant (see Section \ref{section-coflows} for the details). 
Moreover, in \cite{Grigorian} it is proved that the coflow \eqref{modifiedcoflow} is weakly parabolic in the
direction of closed forms  $\psi(t)$ up to diffeomorphisms  and, on compact manifolds, it has a unique solution $\psi(t)$
for the short time period $t\in [0, \epsilon)$, for some $\epsilon\,>0$.

 In \cite{BFF}, it is given a classification of 2-step nilpotent Lie groups admitting left invariant coclosed $\Gtwo$-structures.
In this paper, we study the coflows  \eqref{co-flow} and \eqref{modifiedcoflow}
in the case of the $7$-dimensional Heisenberg group $H$. 

As we mentioned before,  there is not known  any general result on the short time existence of solution for the coflow 
\eqref{co-flow}. Nevertheless, in Theorem \ref{Hflow}, we show that the solution of the coflow \eqref{co-flow} for  any 
 coclosed $\Gtwo$-structure  on the Heisenberg group is always {\em ancient}, 
that is it is defined on a time interval of the form $(-\infty, T)$, where $T > 0$ is a real number.  To our knowledge, these are the first examples of non-compact manifolds having a coclosed $\Gtwo$-structure for which the time interval of existence of the solution for  \eqref{co-flow} is not finite.
 However, we prove that the solution of the coflow \eqref{modifiedcoflow} for some coclosed $\Gtwo$ forms on $H$ is defined only on a finite interval 
(Theorem \ref{Th6.2}) and,
for other coclosed $\Gtwo$ forms, 
the solution of \eqref{modifiedcoflow} is {\em ancient} (Theorem \ref{Th6.1}, part i), and Theorem \ref{Th6.1-2}) or {\em eternal}, that is it is defined 
for all  $t \in \R$ (Theorem  \ref{Th6.1}, part ii)).

Moreover, considering the coflows  \eqref{co-flow}   and \eqref{modifiedcoflow} on the associated Lie algebra as a bracket flow on $\R^7$, in a similar way as Lauret did in \cite{La09} for the Ricci flow, we show that    the underlying metrics $g(t)$ of the  solution in Corollary  \ref{th5.1}  and  Theorem \ref{Th6.1-2} converge smoothly, up to pull-back by time-dependent diffeomorphisms, to a flat metric, as $t$ goes to infinity.  Indeed, by \cite[Proposition 2.8]{La09}  the convergence of the metrics in ${\mathcal C}^{\infty}$  uniformly on 
compact sets  in $\R^7$  is equivalent to the convergence of the nilpotent Lie brackets $\mu(t)$  in the algebraic subset of nilpotent Lie brackets 
${\mathcal N} \subset (\Lambda^2 \R^7)^* \otimes \R^7$  with the usual vector space topology.

\section{Coclosed $\Gtwo$-structures on the Heisenberg group}\label{coclosed-Heisenberg}

A $7$-dimensional  manifold $M$ is said to admit a $\Gtwo$-{\em structure} if there is a reduction of the structure group of 
its frame bundle from $\mathrm{GL}(7,\mathbb{R})$ to the  exceptional Lie group $\Gtwo$, which can actually be viewed naturally as a subgroup of $\mathrm{SO}(7)$. 
Thus, a $\Gtwo$-structure determines a Riemannian metric and an orientation on $M$. In fact, one can prove 
that the presence of a $\Gtwo$-structure is equivalent to the existence of a differential $3$-form $\varphi$ (the $\Gtwo$ form) on $M$, which 
 induces the Riemannian metric $g_{\varphi}$
given by
\begin{equation}\label{metric}
g_{\varphi} (X,Y)\, vol=\frac{1}{6} \iota_{X}\varphi \wedge \iota_{Y}\varphi \wedge \varphi,
\end{equation}
for any vector fields $X, Y$ on $M$, where $vol$ is the volume form on $M$, and $\iota_{X}$ denotes the contraction by $X$.
Let  $\star_{\varphi}$ be the Hodge star operator determined by $g_{\varphi}$ and the orientation induced by $\varphi$. We will always write 
$\psi$ to denote the dual 4-form of a $\Gtwo$-structure $\varphi$, that is
$$
\psi=\star_{\varphi} \varphi.
$$ 
A manifold $M$ has a {\em coclosed} (or {\em cocalibrated}) $\Gtwo$-{\em structure}
if there is a $\Gtwo$-structure on $M$ such that the $\Gtwo$ form $\varphi$ is coclosed, that is 
$d\psi=0$. 

Now, let $G$ be a $7$-dimensional simply connected nilpotent
Lie group with Lie algebra $\mathfrak g$. Then, a $\Gtwo$-structure 
on $G$ is \emph{left invariant} if and only if the corresponding
$3$-form $\varphi$ is left invariant. Thus, a left invariant $\Gtwo$-structure on 
$G$ corresponds to an element $\varphi$ of $\Lambda^3({\mathfrak g}^*)$ that 
can be written as 
\begin{equation}\label{eqn:3-forma G2}
 \varphi=e^{127}+e^{347}+e^{567}+e^{135}-e^{146} -e^{236}-e^{245},
\end{equation}
with respect to some orthonormal coframe $\{e^1,\dotsc, e^7\}$ of the dual space ${\mathfrak{g}}^*$,
where $e^{127}$ stands for $e^1\wedge e^2\wedge e^7$, and so on.
So the dual form $\psi=\star_{\varphi}\varphi$
has the following expression
\begin{equation}\label{eqn:4-forma G2}
 \psi=e^{1234}+e^{1256}+e^{1367}+e^{1457}+e^{2357}  -e^{2467} +e^{3456}.
\end{equation}

Note that in order to recover the left invariant $\Gtwo$ form $\varphi$
from the 4-form $\star_{\varphi} \varphi$ we need  to fix an orientation of $\mathfrak g$.
In fact, the stabilizer of $\star_{\varphi} \varphi$ in $\mathrm{GL}(7,\mathbb{R})$ is $\Gtwo \times \mathbb{Z}_{2}$ since the matrix
$-Id$ preserves the form $\star_{\varphi}\varphi$,  and so the latter fails to determine the overall orientation.

Recall that the  seven dimensional Heisenberg group $H$
is the simply connected nilpotent Lie group whose Lie algebra $\mathfrak h$ is defined by
\begin{equation}\label{eq:lieheisenberg}
\frak h=\left (0,0,0,0,0,0,\frac{\sqrt{6}}{6}(e^{12}+e^{34}+e^{56})\right ).
\end{equation}
This notation means that the dual space $\frak {h}^*$ is spanned by $\{ e^1,\ldots ,e^7\}$ satisfying
$$
de^i=0, \quad 1\leq i\leq 6, \qquad de^7=\frac{\sqrt{6}}{6}(e^1 \wedge e^2 +e^3 \wedge e^4 +e^5 \wedge e^6).
$$

\section{On the coflows of cococlosed $\Gtwo$-structures}\label{section-coflows}

Here we show  the expression of each one of the coflows \eqref{co-flow} and \eqref{modifiedcoflow} in terms
of the intrinsic torsion forms of a coclosed $\Gtwo$-structure  \cite{Bryant, Grigorian}.

Let $M$ be a $7$-dimensional manifold with a $\Gtwo$-structure defined by a $3$-form $\varphi$.   Denote by $\psi$ the 4-form $\psi = \star_{\varphi} \varphi$, where 
$\star_{\varphi}$ is the Hodge star operator of the metric $g_{\varphi}$ induced by $\varphi$. Let $(\Omega^* (M), d)$ be the de Rham complex of differential forms on $M$. Then, Bryant  in \cite{Bryant} proved that the 
forms $d \varphi$ and $d \psi$ are such that 
\begin{gather}\label{torsion}
\begin{cases}
d\varphi= \tau_0\,\psi+3\,\tau_1\wedge\varphi+\star_{\varphi} \tau_3,\\
d\psi=4\tau_1\wedge\psi-\star_{\varphi} \tau_2,
\end{cases}
\end{gather}
where $\tau_0\in\Omega^0 (M), \tau_1\in\Omega^1 (M), \tau_{2}\in\Omega_{14}^2(M)$ and $\tau_3\in\Omega_{27}^3(M)$. Here
$\Omega_{14}^2(M)$ and $\Omega_{27}^3(M)$ are the spaces
\begin{equation*}
\begin{array}{l}
\Omega_{14}^2(M)=  \{ \alpha \in \Omega^2 (M) \, \mid  \, \alpha \wedge \varphi = - \star_{\varphi} \alpha \},\\.
\end{array}
\end{equation*}
\begin{equation*}
\begin{array}{l}
\Omega^3_{27}  (M)= \{ \beta \in \Omega^3 (M) \, \mid  \, \beta \wedge \varphi =0= \beta \wedge \star_{\varphi}  \varphi \}.
\end{array}
\end{equation*}
The differential  forms $\tau_i$ ($i=0, 1, 2, 3$) that appear in \eqref{torsion}, are called the {\em intrinsic torsion forms} of $\varphi$.
According to Grigorian \cite{Grigorian}  the {\em full torsion tensor} $\tau$ of $\varphi$ is the  tensor  field on $M$  given by
$$
\tau=\frac{1}{4}\tau_0\,g_{\varphi}-\imath_{\tau_1}\varphi-\frac{1}{3}j_{\varphi}(\tau_3)+\frac{1}{2}\tau_{2},
$$
where  
$\imath_{\tau_1}$ denotes the contraction by $\tau_1$ using the metric $g_{\varphi}$  induced by $\varphi$
(that is, if $U$ is the vector field on $M$ such that $\tau_1\,=\,\imath_{U} g$, then $\imath_{\tau_1}\varphi=\imath_{U}\varphi$)
and $j_{\varphi}\,\colon\, \Omega^3 (M)\,\longrightarrow\, S^{2}(M)$ is the map defined by
$$
j_{\varphi}(\gamma)(X, Y)\,=\,\star_{\varphi} \Big((\imath_{X} \varphi ) \wedge(\imath_{Y} \varphi )  \wedge \gamma\Big),
$$
where $\gamma\in \Omega^3 (M)$, and $X, Y$ are vector fields on $M$ \cite{Bryant}.  
In particular, by \cite{Bryant} $j_{\varphi}$ 
is an isomorphism between the space  
$\Omega^3_{27}  (M)$
and  the space  $S^2_0 (M)$ of  trace-free  symmetric $2$-tensors on $M$.

Recall that $\varphi$ defines a coclosed $\Gtwo$-structure on $M$ if $\psi$ is closed, that is $d \psi =0$.  
In this case, \eqref{torsion} implies that  the forms $\tau_1$ and $\tau_{2}$ vanish, and so 
the full torsion tensor $\tau$ has the following expression  
$$
\tau=\frac{1}{4} \tau_0\,g_{\varphi} -\frac{1}{3}j_{\varphi}(\tau_{3}).
$$ 
Since $\tau_3  \in \Omega^3_{27}(M)$,   the trace  of  $j_{\varphi}(\tau_{3})$ vanishes. Therefore, $\mathrm{Tr}(\tau)$ of $\tau$ is given by 
\begin{equation}\label{exp:trace}
\mathrm{Tr}(\tau)= \frac{1}{4} \tau_0\,  \mathrm{Tr}(g_{\varphi}) =    \frac{7}{4}\tau_0.
\end{equation}

\medskip

\begin{lemma} \label{lem:relations-torsion}
Let $M$ be a $7$-dimensional manifold with a coclosed  $\Gtwo$ form $\varphi$. Denote
by $\tau_0$ and  $\tau_3$ the torsion forms of $\varphi$. Then,  the torsion forms $\widetilde{\tau_0}$ and  $\widetilde{\tau_3}$ of $-\varphi$ satisfy
\begin{equation}\label{relations-torsion}
\widetilde{\tau_0}=-\tau_0, \quad \widetilde{\tau_3}=\tau_3.
\end{equation}
\end{lemma}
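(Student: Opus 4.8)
The plan is to reduce everything to the observation, implicit in the discussion following \eqref{eqn:4-forma G2}, that $-\mathrm{Id}$ fixes $\psi$ while sending $\varphi$ to $-\varphi$. First I would record that $-\varphi$ is again a coclosed $\Gtwo$ form defining the \emph{same} metric $g_{\varphi}$ but the \emph{opposite} orientation. This follows directly from \eqref{metric}: each of the two contractions and the wedge with $\varphi$ contributes a sign, so the right-hand side computed for $-\varphi$ equals $(-1)^{3}=-1$ times the one for $\varphi$, that is $g_{-\varphi}(X,Y)\,vol_{-\varphi}=-g_{\varphi}(X,Y)\,vol_{\varphi}$; since both $g_{-\varphi}$ and $g_{\varphi}$ are positive definite this forces $g_{-\varphi}=g_{\varphi}$ and $vol_{-\varphi}=-vol_{\varphi}$. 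Because the Hodge operator is characterized by $\alpha\wedge\star\beta=g(\alpha,\beta)\,vol$, reversing the orientation flips its sign, so $\star_{-\varphi}=-\star_{\varphi}$, and hence the dual $4$-form of $-\varphi$ is $\widetilde{\psi}=\star_{-\varphi}(-\varphi)=\star_{\varphi}\varphi=\psi$. In particular $d\widetilde{\psi}=d\psi=0$, so $-\varphi$ is coclosed and its torsion forms $\widetilde{\tau_1},\widetilde{\tau_2}$ vanish as well.

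Next I would check that the relevant torsion subspaces are insensitive to the sign of $\varphi$. Substituting $\star_{-\varphi}=-\star_{\varphi}$ and $\widetilde{\psi}=\psi$ into the defining conditions for $\Omega^2_{14}$ and $\Omega^3_{27}$ shows immediately that these spaces, computed with $-\varphi$, coincide with those computed with $\varphi$; in particular $\widetilde{\tau_3}\in\Omega^3_{27}$ and $\star_{-\varphi}\widetilde{\tau_3}\in\star_{\varphi}(\Omega^3_{27})=\Omega^4_{27}$. Thus both torsion identities below are expressed in the \emph{same} fixed splitting $\mathbb{R}\psi\oplus\Omega^4_{27}$.

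The heart of the argument is to compare the first identity in \eqref{torsion} for $\varphi$ and for $-\varphi$. As both structures are coclosed, the $\tau_1,\tau_2$ terms drop out and
\[
d\varphi=\tau_0\,\psi+\star_{\varphi}\tau_3, \qquad d(-\varphi)=\widetilde{\tau_0}\,\psi-\star_{\varphi}\widetilde{\tau_3},
\]
where in the second equality I used $\widetilde{\psi}=\psi$ and $\star_{-\varphi}=-\star_{\varphi}$. On the other hand $d(-\varphi)=-d\varphi$, so
\[
\widetilde{\tau_0}\,\psi-\star_{\varphi}\widetilde{\tau_3}=-\tau_0\,\psi-\star_{\varphi}\tau_3.
\]
Now I would invoke the uniqueness of the $\Gtwo$-irreducible splitting of $4$-forms: $\psi$ spans the one-dimensional summand $\Omega^4_1$, while $\star_{\varphi}(\Omega^3_{27})=\Omega^4_{27}$ is a complementary summand, so the two components must match separately. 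Equating the $\Omega^4_1$-parts gives $\widetilde{\tau_0}=-\tau_0$, and equating the $\Omega^4_{27}$-parts, together with the invertibility of $\star_{\varphi}$, gives $\widetilde{\tau_3}=\tau_3$, which are exactly the relations \eqref{relations-torsion}.

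The only genuinely delicate step is the first one: one must keep track that replacing $\varphi$ by $-\varphi$ leaves the metric $g_{\varphi}$ and the $4$-form $\psi$ untouched but reverses the orientation, thereby introducing the crucial sign in $\star_{-\varphi}=-\star_{\varphi}$. Once this is settled, the rest is routine bookkeeping with the torsion identities \eqref{torsion} and the orthogonality of the $\Gtwo$-decomposition.
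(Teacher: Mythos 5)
Your proof is correct and follows essentially the same route as the paper's: the paper's own argument is a one-line appeal to the torsion identities \eqref{torsion} together with the sign change $\star_{-\varphi}=-\star_{\varphi}$. What you have added --- the verification that $g_{-\varphi}=g_{\varphi}$ with reversed orientation (hence $\widetilde{\psi}=\psi$), and the use of the irreducible splitting $\mathbb{R}\psi\oplus\Omega^4_{27}$ to equate components --- is exactly the bookkeeping the paper leaves implicit, so there is no gap and no genuinely different method.
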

\begin{proof}
Using \eqref{torsion}, we see that $\widetilde{\tau_0}=-\tau_0$ and $\widetilde{\tau_3}=\tau_3$ since $\star_{-\varphi}=-\star_{\varphi}$.
\end{proof}

\medskip

\begin{proposition} \label{prop:exp-C-G}
 Let $M$ be a $7$-dimensional manifold with a coclosed  $\Gtwo$ form $\varphi$. Then, the coflow \eqref{co-flow} for $\varphi$ has the following expression
\begin{equation*}
\begin{aligned}
\mathrm{(C)}\quad  &\frac {\partial}{\partial t} \psi(t)= 
- d\big(\tau_0 (t)\big)\wedge\varphi (t)-\big(\tau_0(t)\big)^2 \psi(t) -\tau_0(t)\star_t \tau_3(t)-d\tau_3(t),\\ &
d \psi(t)= 0, \quad \varphi(0)= \varphi,
\end{aligned}
\end{equation*}
and the  modified coflow \eqref{modifiedcoflow} is expressed as 
\begin{equation*}
\begin{aligned}
\mathrm{(G)}\quad  &\frac {\partial}{\partial t} \psi(t)=
\tau_0  (t)\left(2A-\frac{5}{2} \tau_0 (t) \right)\psi(t)+\left(2A-\frac{5}{2} \tau_0 (t)\right)*_t \tau_3 (t)+d \tau_3 (t)\\
&\quad  \quad \quad  +\frac{5}{2}\varphi (t) \wedge d \tau_0(t),\\
& d \psi(t) =0, \quad  \varphi(0)= \varphi,
\end{aligned}
\end{equation*}
where $\tau_{0}(t)$ and  $\tau_{3}(t)$ are the torsion forms of $\varphi(t)$ {\rm(}according with \eqref{torsion}{\rm)}, $\star_{t}$ is the Hodge star operator with respect to the Riemannian metric $g_{\varphi(t)}$ induced by $\varphi(t)$  and $A$ is a fixed positive constant.
\end{proposition}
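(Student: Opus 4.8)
The plan is to compute the Hodge Laplacian $\Delta_t\psi(t)$ in closed form and then rewrite it through the torsion equations \eqref{torsion}. Since both coflows keep $\psi(t)$ closed (the constraint $d\psi(t)=0$ is built into \eqref{co-flow} and \eqref{modifiedcoflow}), at every time $t$ we have $d\psi(t)=0$, and hence $\Delta_t\psi(t)=d\,d^*_t\psi(t)$, the summand $d^*_t d\psi(t)$ vanishing. This is the simplification that makes the right-hand sides explicit, and the identity I derive may then be applied pointwise in $t$ because each $\varphi(t)$ is a coclosed $\Gtwo$-structure.

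First I would evaluate $d^*_t\psi(t)$. The codifferential formula $d^*=(-1)^{n(k+1)+1}\star d\,\star$ reduces, in degree $k=4$ on a seven-dimensional manifold, to $d^*_t=\star_t\,d\,\star_t$; combined with $\star_t\psi(t)=\varphi(t)$ (since $\star_t\star_t$ is the identity on $3$-forms here) this gives $d^*_t\psi(t)=\star_t\,d\varphi(t)$. The coclosed hypothesis forces $\tau_1=\tau_2=0$ in \eqref{torsion}, so $d\varphi(t)=\tau_0(t)\,\psi(t)+\star_t\tau_3(t)$; applying $\star_t$ once more (again using $\star_t\star_t=\mathrm{id}$) yields the compact expression $d^*_t\psi(t)=\tau_0(t)\,\varphi(t)+\tau_3(t)$.

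Next I would differentiate and re-substitute. One has
\[
\Delta_t\psi(t)=d\big(\tau_0(t)\,\varphi(t)+\tau_3(t)\big)=d\tau_0(t)\wedge\varphi(t)+\tau_0(t)\,d\varphi(t)+d\tau_3(t),
\]
and replacing $d\varphi(t)$ by $\tau_0(t)\,\psi(t)+\star_t\tau_3(t)$ one more time gives
\[
\Delta_t\psi(t)=d\tau_0(t)\wedge\varphi(t)+\tau_0(t)^2\,\psi(t)+\tau_0(t)\,\star_t\tau_3(t)+d\tau_3(t).
\]
Negating this produces exactly the right-hand side of $(\mathrm{C})$. For $(\mathrm{G})$ I would expand the modifying term using $\mathrm{Tr}_t(\tau(t))=\tfrac{7}{4}\tau_0(t)$ from \eqref{exp:trace}, writing
\[
2\,d\big((A-\mathrm{Tr}_t(\tau(t)))\varphi(t)\big)=\big(2A-\tfrac{7}{2}\tau_0(t)\big)\,d\varphi(t)-\tfrac{7}{2}\,d\tau_0(t)\wedge\varphi(t),
\]
then substitute $d\varphi(t)$, add the result to $\Delta_t\psi(t)$, and collect the coefficients of $\psi(t)$, of $\star_t\tau_3(t)$, of $d\tau_3(t)$ and of $d\tau_0(t)\wedge\varphi(t)$ to recover $(\mathrm{G})$.

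The computation is routine; the only delicate points are the sign conventions. One must fix the orientation induced by $\varphi(t)$ so that $\star_t\star_t=\mathrm{id}$ delivers both $\star_t\psi(t)=+\varphi(t)$ and $\star_t\star_t\tau_3(t)=+\tau_3(t)$, verify the precise sign of the codifferential in degree four, and invoke the graded-commutativity identity $d\tau_0(t)\wedge\varphi(t)=-\varphi(t)\wedge d\tau_0(t)$ in order to match the final coefficient $+\tfrac{5}{2}$ in the stated form of $(\mathrm{G})$. I expect this sign bookkeeping to be the main, and essentially the only, obstacle.
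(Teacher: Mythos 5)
Your proposal is correct and follows essentially the same route as the paper: use $d\psi(t)=0$ to reduce $\Delta_t\psi(t)$ to $d\,d^*_t\psi(t)=d\,\star_t d\varphi(t)$, substitute $d\varphi(t)=\tau_0(t)\psi(t)+\star_t\tau_3(t)$ twice, and for $(\mathrm{G})$ expand the extra term via $\mathrm{Tr}(\tau(t))=\tfrac{7}{4}\tau_0(t)$ and collect coefficients. Your extra care with the codifferential sign and the identity $d\tau_0\wedge\varphi=-\varphi\wedge d\tau_0$ is exactly the bookkeeping implicit in the paper's computation.
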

\begin{proof}
Since the solution $\psi (t)$ to the coflow \eqref{co-flow}, if it exists, remains closed and    \eqref{modifiedcoflow} preserves  the  closedness of $\psi (t)= \star_{t} \varphi(t)$, by \eqref{torsion} and the vanishing of the torsion forms $\tau_1(t)$ and $\tau_2(t)$ of $\varphi(t)$,
$$
d \varphi(t) = \tau_0(t) \psi(t) + \star_t \tau_3(t).
$$
 Hence,
\begin{eqnarray*}
\Delta_{t} \psi (t)&=&d\,d^*\psi (t)=d \star_t d\varphi (t)=d \star_ t \Big(\tau_0 (t)\psi (t) +\star_t \tau_3 (t)\Big)\\
&=& d(\tau_0 (t))\wedge\varphi (t)+\tau_0 (t)^2\psi (t)+\tau_0 (t) \star_t \tau_3 (t)+d\tau_3 (t),
\end{eqnarray*}
and
\begin{eqnarray*}
2d \Big(\big(A-\mathrm{Tr}(\tau (t))\big) \varphi (t)\Big) &=&2d\left( (A-\frac{7}{4}\tau_0 (t)) \varphi (t) \right)\\
&=&-\frac{7}{2}d(\tau_0 (t))\wedge\varphi (t)+\left(2A-\frac{7}{2}\tau_0 (t) \right)(\tau_0 (t) \psi (t) + \star_t \tau_3 (t)).
\end{eqnarray*}
Thus,
\begin{eqnarray*}
\Delta_{t}\psi+ 2d \Big(\big(A-\mathrm{Tr}(\tau (t))\big) \varphi (t)\Big) &=&
-\frac{5}{2}d(\tau_0 (t))\wedge\varphi (t)+\tau_0 (t)\left(2A-\frac{5}{2}\tau_0 (t)\right)\psi (t)\\
&&+\left(2A-\frac{5}{2}\tau_0 (t)\right) \star_t \tau_3 (t)+d\tau_3 (t),
\end{eqnarray*}
and the Proposition follows.
\end{proof}

\begin{remark} \label{rem:gen-opposite}
Note that \eqref{relations-torsion} and Proposition \ref{prop:exp-C-G} imply that the solution of the coflow (G) for $\varphi$
(if such a solution exists) changes when the initial coclosed $\Gtwo$ form is $-\varphi$ instead of $\varphi$  (see
Theorem \ref{Th6.1} and Theorem \ref{Th6.1-2}). However, the study of the coflow (C) 
is independent of whether the initial condition is $\varphi$ or $-\varphi$.
\end{remark}

\begin{remark}    
By \cite{Grigorian}, since $ \mathrm{Tr}(\tau (t))  = \frac 7 4  \tau_0 (t)$,  as long as the condition $0 \leq \frac 7 4  \tau_0 (t)  \leq \frac 43 A$ holds 
 for the time of  existence, we have the following inequality  for the volume  
 $$
A   \int_M   \frac{7}{4}\,\tau_0 (t)\,\mathrm{vol}  \geq    \int_M  \frac {3}{4} \Big(\frac{7}{4}\,\tau_0 (t)\Big)^{2}\, \mathrm{vol}.
$$ 
\end{remark}

\medskip

\section{Explicit solutions for the Laplacian coflow}\label{sect:coflow}

In this section we study the Laplacian coflow on the seven dimensional Heisenberg Lie group $H$ with structure equations \eqref{eq:lieheisenberg}.\par\bigskip
Let   $\varphi_0$  be a  left invariant coclosed $\Gtwo$-structure on $H$. Denote  by $g_0$ the underling metric  and  by  $ \psi_0 = \star_0 \varphi_0$  its  Hodge dual.  

Let   $\eta = \| e^7  \|_0^{-1} e^7$. Clearly  $\|\eta \|_0 = 1$ and $d\eta\in\Lambda^2\mathrm{Ker}(\eta)^*$  is a non-degenerate two-form on $\mathrm{Ker}(\eta)$. Moreover, $ \mathrm{Ker}(\eta)^* = {\mbox Span}\la  e^1, \ldots, e^6\ra$ and the $1$-forms $e^j$, $j = 1, \ldots, 6$, are all closed.  If we  identify $\frak h$  with $\mathrm{Ker}(\eta)\oplus \frak z$, being  
 $\frak z=[\frak h, \frak h] = {\mbox Span}\la e_7\ra$
the commutator of $\frak h$,  then every  four-form $\psi  \in \Lambda^4 \frak h^*$ has a unique decomposition as
\begin{equation}\label{decomp}
\psi=\psi^{(4)}+\psi^{(3)}\wedge\eta,
\end{equation}
where $\psi^{(i)}\in\Lambda^i\mathrm{Ker}(\eta)^*,\;i=3,4$,  are closed forms.\par

Denote by $\star_0$ and $*_0$ the Hodge operators on $\frak h$ and $\mathrm{Ker}(\eta),$ respectively. 
Note that  the  $\Gtwo$-structure $\varphi_0$ defines an $\SU(3)$-structure
 $(\omega_0, \rho_0)$ on  $\mathrm{Ker}(\eta)$. Using this fact, the four-form $ \psi_0 = \star_0 \varphi_0$ on $\frak h$ can be written as
$$
\psi_0 = \frac{1}{2}\omega_0^2+  \widehat  \rho_0 \wedge \eta,
$$
where $ \widehat \rho_0 =  J_0 \rho_0$, and  $J_0$  is the almost complex structure induced by  $(\omega_0, \rho_0)$. Indeed, 
if $x_0 \in \frak h$  is  the vector  defined by
$$
g_0(x_0,y)=\eta(y), 
$$
for every $y\in \frak h$, then
$$
\mathrm{Ker}(\eta)=\left\{y\in\ {\frak h}\,|\,g_0 (x_0,y)=0\right\} ={\mbox {Span}}\la x_0\ra ^{{\perp}_{0}},
$$
and  we can apply Proposition 4.5 in \cite{Schulte} to define the $\SU(3)$-structure $(\omega_0, \rho_0)$. 

For a general $\SU(3)$-structure on a real vector space we 
 have the following result.

\begin{lemma}\label{lem}
Let $(\omega,\rho)$ be a linear $\SU(3)-$structure on $\R^6$, and let $\alpha\in\Lambda^2(\R^6)^*$. Then the following inequalities hold
\begin{enumerate}
\item[1.] $\| \alpha \|^2+ \| \frac 12 \omega^2\wedge\alpha\|^2= \| \alpha\wedge\omega\|^2\leq 4 \|\alpha\|^2;$
\smallskip

\item[2.]  $\|\alpha^3\|^2\leq 6  \| \alpha\|^6$,  where $\| \cdot \|$ is the norm  induced by  the scalar product   
defined by the $\SU(3)$-structure $(\omega,\rho)$.
\end{enumerate}  
\end{lemma}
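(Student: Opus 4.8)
The plan is to prove both statements as pointwise facts of linear algebra on the oriented inner product space $(\R^6, g, \mathrm{vol})$ determined by $(\omega,\rho)$, using only the metric $g$, the fundamental $2$-form $\omega$, and the induced volume form $\mathrm{vol}$; the complex volume form $\rho$ itself will play no explicit role. Throughout, $\la\cdot,\cdot\ra$ and $\|\cdot\|$ denote the induced inner product and norm on $\Lambda^\bullet(\R^6)^*$, normalized so that a wedge of distinct members of a $g$-orthonormal coframe has unit norm, and I write $L\colon\beta\mapsto\beta\wedge\omega$ for the Lefschetz operator and $\Lambda=L^*$ for its adjoint.

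For part 1, the first step is the identity $\tfrac12\omega^2\wedge\alpha=\la\alpha,\omega\ra\,\mathrm{vol}$. This follows from $\star\omega=\tfrac12\omega^2$ (immediate in the standard coframe $\omega=e^{12}+e^{34}+e^{56}$) together with the defining relation $\alpha\wedge\star\omega=\la\alpha,\omega\ra\,\mathrm{vol}$ of the Hodge star; hence $\|\tfrac12\omega^2\wedge\alpha\|^2=\la\alpha,\omega\ra^2$. The second step computes $\|\alpha\wedge\omega\|^2=\la L\alpha,L\alpha\ra=\la\alpha,\Lambda L\alpha\ra$ by means of the $\mathfrak{sl}(2)$-commutation relation $[\Lambda,L]=(n-k)\,\mathrm{Id}$ on $k$-forms, which for $n=3$ (complex dimension) and $k=2$ reads $\Lambda L\alpha=\alpha+L\Lambda\alpha$. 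Since $\Lambda\alpha=\la\alpha,\omega\ra$ is a scalar (being $\la\alpha,L1\ra$), this yields $\|\alpha\wedge\omega\|^2=\|\alpha\|^2+\la\alpha,\omega\ra^2$, which is exactly the claimed equality $\|\alpha\|^2+\|\tfrac12\omega^2\wedge\alpha\|^2=\|\alpha\wedge\omega\|^2$. The remaining bound $\|\alpha\wedge\omega\|^2\le 4\|\alpha\|^2$ then reduces, via $\|\omega\|^2=3$, to the Cauchy--Schwarz inequality $\la\alpha,\omega\ra^2\le\|\alpha\|^2\,\|\omega\|^2=3\|\alpha\|^2$, with equality precisely when $\alpha$ is a multiple of $\omega$.

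For part 2 I would instead adapt the basis to $\alpha$ rather than to $\omega$: since any $2$-form can be put into normal form by a $g$-orthonormal change of coframe, choose $\{f^1,\dots,f^6\}$ with $\alpha=\lambda_1 f^{12}+\lambda_2 f^{34}+\lambda_3 f^{56}$. Then $\|\alpha\|^2=\lambda_1^2+\lambda_2^2+\lambda_3^2$, while $\alpha^3=6\,\lambda_1\lambda_2\lambda_3\,f^{123456}=\pm 6\,\lambda_1\lambda_2\lambda_3\,\mathrm{vol}$, so that $\|\alpha^3\|^2=36\,\lambda_1^2\lambda_2^2\lambda_3^2$. The inequality is then immediate from the AM--GM inequality $\lambda_1^2\lambda_2^2\lambda_3^2\le\big(\tfrac13(\lambda_1^2+\lambda_2^2+\lambda_3^2)\big)^3$, which gives $\|\alpha^3\|^2\le\tfrac{36}{27}\|\alpha\|^6=\tfrac43\|\alpha\|^6\le 6\|\alpha\|^6$; in particular the stated constant $6$ is far from optimal but suffices.

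None of these steps is genuinely hard; the only points requiring care are bookkeeping ones. In part 1 one must pin down the Hodge convention so that $\star\omega=\tfrac12\omega^2$ and the correct numerical coefficient $n-k=1$ in the Lefschetz commutator; in part 2 one must check that the normal-form coframe is $g$-orthonormal, so that $\|f^{123456}\|=\|\mathrm{vol}\|=1$, and that the combinatorial factor in $\alpha^3=6\lambda_1\lambda_2\lambda_3\,\mathrm{vol}$ is correct. A fully self-contained alternative for part 1, avoiding the operator identities, is to split $\alpha=\lambda\omega+\beta$ into its $\omega$-trace part and primitive part $\beta$ (so that $\omega^2\wedge\beta=L^2\beta=0$ and $\la\beta,\omega\ra=0$), verify $\|L\beta\|^2=\|\beta\|^2$, and sum the orthogonal contributions; this reproduces the same identity and inequality.
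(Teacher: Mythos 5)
Your proof is correct. For part 2 you follow essentially the same route as the paper: reduce $\alpha$ to the normal form $\lambda_1 f^{12}+\lambda_2 f^{34}+\lambda_3 f^{56}$ in a $g$-orthonormal coframe via the spectral theorem, compute $\|\alpha\|^2=\lambda_1^2+\lambda_2^2+\lambda_3^2$ and $\|\alpha^3\|^2=36\,\lambda_1^2\lambda_2^2\lambda_3^2$, and conclude; the paper leaves the last step implicit (``and 2.\ follows''), while you supply the AM--GM inequality explicitly and observe that it in fact yields the sharper constant $\tfrac43$ in place of $6$. For part 1, however, your route is genuinely different. The paper works entirely in an orthonormal coframe adapted to $\omega$ (so that $\omega=e^{12}+e^{34}+e^{56}$), expands $\alpha=\sum a_{hk}e^{hk}$, computes $\omega\wedge\alpha$ term by term to get $\|\omega\wedge\alpha\|^2=\|\alpha\|^2+(a_{12}+a_{34}+a_{56})^2$, identifies the second summand with $\|\tfrac12\omega^2\wedge\alpha\|^2=(\omega\,|\,\alpha)^2$, and finishes with Cauchy--Schwarz. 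You obtain the same identity coordinate-free from the Lefschetz commutation relation $[\Lambda,L]=(n-k)\,\mathrm{Id}$ (correctly instantiated with $n=3$, $k=2$) together with $\star\omega=\tfrac12\omega^2$, and your concluding Cauchy--Schwarz step coincides with the paper's. What your argument buys is structural clarity and generality: the equality is exposed as the $k=2$ case of an $\mathfrak{sl}(2)$ identity valid in any dimension and degree, so the constants are transparent; what it costs is the appeal to the Lefschetz machinery (or, in your self-contained variant, the verification that $\|L\beta\|^2=\|\beta\|^2$ for primitive $\beta$), whereas the paper's computation, though longer, uses nothing beyond expanding wedge products in an orthonormal basis. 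Both arguments are pointwise linear algebra and never use $\rho$, so both prove the lemma exactly as stated.
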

\begin{proof}
Let us fix an orthonormal basis 
$\{ e^1, \ldots, e^6 \}$ of $(\R^6)^*$ so  that $\omega=e^{12}+e^{34}+e^{56}$, and write 
$\alpha=\sum_{1\leq h< k \leq 6}a_{hk}e^{hk}$. 
 Then,
\begin{equation}\label{eqn:norm1}
\| \alpha \|^2= \sum_{1\leq h<h \leq 6}^6 a_{hk}^2.
\end{equation}
On the other hand, 
$$
\begin{array}{lcl}
\omega\wedge\alpha&=&e^{12} \wedge \left(a_{34}e^{34}+a_{35}e^{35}+a_{36}e^{36}+a_{45}e^{45}+a_{46}e^{46}+a_{56}e^{56}\right)\\
&& + e^{34} \wedge \left(a_{12}e^{12}+a_{15}e^{15}+a_{16}e^{16}+a_{25}e^{25}+a_{26}e^{26}+a_{56}e^{56}\right)\\
&& + e^{56} \wedge \left(a_{12}e^{12}+a_{13}e^{13}+a_{14}e^{14}+a_{23}e^{23}+a_{24}e^{24}+a_{34}e^{34}\right).\\
\end{array}
$$
Thus,
\begin{equation} \label{eqn:norm2}
\| \omega\wedge\alpha \|^2=  \| \alpha \|^2+\left(a_{12}+a_{34}+a_{56}\right)^2= \| \alpha \|^2+  \left \| \frac 12 \, \omega^2\wedge\alpha \right  \|^2.
\end{equation}
 Moreover,
$$ \left \| \frac 12  \omega^2\wedge\alpha   \right \|^2= \| *(\omega)\wedge\alpha\|^2=\left(\omega|\alpha\right)^2\leq \| \omega \|^2 \| \alpha\|^2=3 \|\alpha \|^2.$$
\par
This equality together with \eqref{eqn:norm1} and \eqref{eqn:norm2} imply the first part of the Lemma.

To prove $2.$ note that the spectral theorem guarantees the existence of an orthonormal  basis of $1$-forms   $\{ f^1, \ldots, f^6 \}$  such  that 
$\alpha=\lambda_1f^{12}+\lambda_2f^{34}+\lambda_3f^{56}$, for some real numbers $\lambda_i$ with $i=1, 2,3$.    
Indeed, any real skew-symmetric matrix can be diagonalized by a unitary matrix. Since the eigenvalues of a real skew-symmetric matrix are imaginary, it is possible to transform it  to a block diagonal form by an  orthogonal transformation. 
Therefore,
$$
\| \alpha \|^2=\lambda^2_1+\lambda_2^2+\lambda_3^2,
$$
 and
$$ 
{\alpha^3}= 6 \lambda_1\lambda_2\lambda_3f^{123456}.
$$
Thus,
$$
\|  \alpha^3 \|^2= 36 \lambda_1^2\lambda_2^2\lambda_3^2,
 $$
and 2. follows.
\end{proof}

\begin{theorem}\label{Hflow} Let $H$ be the seven dimensional Heisenberg group whose Lie algebra is defined by \eqref{eq:lieheisenberg}. 
Then, for any left invariant coclosed $\Gtwo$ form  $\varphi_0$,  the solution $\phi_t$ of the Laplacian coflow \eqref{co-flow}
 with initial condition $\psi_0 = \star_0 \varphi_0$ is given by
$$\psi(t)=\frac{1}{2}\omega (t)^2+  \widehat \rho (t) \wedge\frac{1}{\varepsilon_t}\,\eta,$$
where $6\,\varepsilon_t^2=*_0(\omega(t)^3)$, and 
 $\omega (t)$ and $\widehat \rho (t)$
are forms on $\mathrm{Ker}(\eta)$,  given  respectively by
\begin{gather*}
\omega (t) = \lambda_1(t)f^{12}+\lambda_2(t)f^{34}+\lambda_3(t)f^{56},\\[3pt]
\widehat \rho(t)=\sqrt{\lambda_1(t)\lambda_2(t)\lambda_3(t)}\left(-f^{246}+f^{136}+f^{145}+f^{235}\right),\\
\end{gather*}
with respect to some   $g_0$-orthonormal frame $\{ f_1, \ldots, f_6 \}$ of $\mathrm{Ker}(\eta)$, and  the functions $\lambda_i (t)$, $i = 1,2,3$,  satisfy 
\begin{equation}\label{flow2}
\left \{  \begin{array}{l} \lambda'_{1}(t)=-\frac{\lambda_2(t)\lambda_3(t)+n_{2}n_{3}\lambda_1^2(t)}{\lambda_1(t)^2\lambda_2(t)^2\lambda_3(t)^2},\\ [4pt]
\lambda'_{2}(t)=-\frac{\lambda_1(t)\lambda_3(t)+n_{1}n_{3}\lambda_2^2(t)}{\lambda_1(t)^2\lambda_2(t)^2\lambda_3(t)^2},\\[4pt]
\lambda'_{3}(t)=-\frac{\lambda_1(t)\lambda_2(t)+n_{1}n_{2}\lambda_3^2(t)}{\lambda_1(t)^2\lambda_2(t)^2\lambda_3(t)^2},\\[4pt]
\lambda_1(0) =   \omega(0) (f_1, f_2), \,\,  \lambda_2(0) =   \omega(0) (f_3, f_4), \, \, \lambda_3(0) =  \omega(0) (f_5, f_6),
\end{array}
\right.
\end{equation}
 for  $n_j \in \left\{1,-1\right\}$. 
In particular,   the solution  is ancient with singular time $0<T< \frac{4}{\sqrt[3]{6\,||d\eta||_0^2}}$.
\end{theorem}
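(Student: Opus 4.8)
\emph{Proof proposal.}
The plan is to use left invariance to reduce the coflow \eqref{co-flow} to the finite system \eqref{flow2}, and then to read off both conclusions from that system. All data being left invariant, \eqref{co-flow} becomes a flow on $\Lambda^4\mathfrak h^*$ which by Proposition \ref{prop:exp-C-G} is given by (C); since $\tau_0(t)$ is a left--invariant function it is constant on $H$, so $d\tau_0(t)=0$ and (C) reduces to $\partial_t\psi=-\tau_0^2\psi-\tau_0\star_t\tau_3-d\tau_3$. I would first note that, through the splitting \eqref{decomp} and $d\widehat\rho_0=0$, the condition $d\psi_0=0$ is equivalent to $\widehat\rho_0\wedge d\eta=0$, which for an $\SU(3)$--structure means exactly that $d\eta$ is of type $(1,1)$ with respect to $J_0$. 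Hence $\omega_0$ and the fixed form $d\eta$ commute as skew endomorphisms and are simultaneously diagonalised in a $g_0$--orthonormal frame $\{f_i\}$, giving $\omega_0=\sum_i\lambda_i(0)f^{2i-1,2i}$ and $d\eta=\sum_i\mu_i f^{2i-1,2i}$; since $d\eta$ has equal singular values the $|\mu_i|$ agree and only their signs $n_i\in\{1,-1\}$ survive once the common factor (equal to $\|d\eta\|_0/\sqrt3$) is absorbed into the time variable.

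The decisive point is that this diagonal form is preserved exactly. Rescaling $\rho(t)$ to keep the $\SU(3)$ normalisation, Hitchin's construction returns the same $J_0$ for every $t$, so the induced metric stays diagonal, $g(t)=\sum_i\lambda_i(t)\big((f^{2i-1})^2+(f^{2i})^2\big)$, and $\star_t$ respects the block decomposition of $\Lambda^\bullet\mathrm{Ker}(\eta)^*$. Writing $\varphi(t)=\varepsilon_t^{-1}\omega(t)\wedge\eta+\rho(t)$ and using $df^i=0$ one finds $d\varphi(t)=\varepsilon_t^{-1}\,\omega(t)\wedge d\eta$, supported on $\mathrm{Ker}(\eta)$. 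Since $\star_t\psi=\varphi$ and $d\psi=0$, one has $\Delta_t\psi=d\,\star_t d\varphi$; the normalisation $6\varepsilon_t^2=*_0(\omega(t)^3)$ forces $\varepsilon_t=\sqrt{\lambda_1\lambda_2\lambda_3}$, so that in the expansion $\tfrac12\omega^2=\lambda_2\lambda_3F_1+\lambda_1\lambda_3F_2+\lambda_1\lambda_2F_3$ each $\star_tF_i$ is a multiple of $f^{2i-1,2i}\wedge\eta$ with coefficient a ratio of the $\lambda$'s, the part $\widehat\rho(t)\wedge\varepsilon_t^{-1}\eta$ becomes time--independent, and only the coefficients $\lambda_j\lambda_k$ evolve. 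The two differentiations by $d\eta$---one in $d\varphi$, one in the outer $d$---produce precisely the products $n_jn_k$ multiplying $\lambda_i^2$, while the inverse--metric factors in $\star_t$ yield the common denominator $(\lambda_1\lambda_2\lambda_3)^2$; projecting $-\Delta_t\psi$ onto $\{F_i\}$ gives $\tfrac{d}{dt}(\lambda_j\lambda_k)$, and solving for $\lambda_i'$ produces \eqref{flow2}.

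It remains to analyse \eqref{flow2}. Positivity is preserved: as some $\lambda_i\to0$ the $i$--th equation behaves like $\lambda_i'\sim-1/(\lambda_i^2\lambda_j\lambda_k)$, so $\lambda_i$ cannot reach $0$ backwards, while forwards this same blow--up of the right--hand side forces a singularity at a finite time $T$. For the upper bound I would track the volume $V=\lambda_1\lambda_2\lambda_3=\varepsilon_t^2$, for which \eqref{flow2} gives $V'=-V^{-2}\big[\sum_{i<j}\lambda_i^2\lambda_j^2+V\sum_i n_jn_k\lambda_i\big]$; combining the elementary estimate $\sum_{i<j}\lambda_i^2\lambda_j^2\ge 3V^{4/3}$ with the scale of $d\eta$ recorded by $\|d\eta\|_0$ yields a differential inequality $(V^{5/3})'\le-C$, and integrating from $V(0)=\varepsilon_0^2=1$ bounds the singular time by $4/\sqrt[3]{6\,\|d\eta\|_0^2}$. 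For the ancient statement one shows completeness backwards: there $V$ is bounded below, each $\lambda_i$ grows at most polynomially (since $\lambda_i'\to0$ as the $\lambda$'s grow) and none can vanish, so the solution extends over all of $(-\infty,T)$.

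The hard part is the backward analysis in the mixed--sign cases $n_jn_k=-1$. There $V$ need not be monotone---$\lambda_i'$ changes sign exactly when $\lambda_j\lambda_k$ crosses $\lambda_i^2$---so a single monotonicity argument cannot close the estimate; one must instead follow these crossings to show that each $\lambda_i$ stays bounded away from $0$ on $(-\infty,0]$ and that the first forward singularity is genuinely reached in finite time. Pinning down the explicit constant in the $T$--bound, as opposed to merely $T<\infty$, is the other delicate step, since the term $V\sum_i n_jn_k\lambda_i$ may be negative and slow the collapse and so must be absorbed carefully into the AM--GM estimate.
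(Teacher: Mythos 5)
Your outline does follow the same skeleton as the paper --- split $\psi(t)$ along \eqref{decomp}, show the $\SU(3)$ data stays diagonal, reduce to \eqref{flow2}, then analyse the ODEs --- but several decisive steps are wrong or missing. First, the diagonalization. It is false that $d\eta$ has equal singular values with respect to $g_0$: it does with respect to the metric making $\{e^i\}$ orthonormal, but $g_0$ comes from an \emph{arbitrary} left-invariant coclosed $\varphi_0$, so the three magnitudes $|\mu_i|$ are in general distinct, and three distinct constants cannot be absorbed into a single time rescaling. The correct normalization diagonalizes the Hermitian form $h^c$ associated with $h(x,y)=d\eta(x,J_0y)$ against $g^c$ and then rescales each frame vector, so that $d\eta=-\sum_i n_i f^i\wedge J_0f^i$ carries only signs while $\omega_0=-\sum_i l_i f^i\wedge J_0 f^i$ carries the magnitudes as the initial values $\lambda_i(0)=l_i$; the resulting frame is only $g_0$-orthogonal (the word ``orthonormal'' in the statement is loose, and trying to take it literally is what forced your false claim --- in a genuinely $g_0$-orthonormal adapted frame one would have $\omega_0=f^{12}+f^{34}+f^{56}$, i.e.\ $\lambda_i(0)=1$, losing all generality). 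Second, the constancy of $J_t$ is not something Hitchin's construction ``returns'' for free: the actual reason the diagonal ansatz is preserved is that $\partial_t\psi(t)$ is exact and every exact $4$-form on $\mathfrak h$ lies in $\Lambda^4\mathrm{Ker}(\eta)^*$, whence $\psi^{(3)}(t)=\psi^{(3)}(0)$, i.e.\ $\widehat\rho(t)=\varepsilon_t\widehat\rho_0$. This structural argument (or else a complete verification of the ansatz plus uniqueness within left-invariant data) is absent from your sketch.

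The more serious failures are in the ODE analysis, i.e.\ in exactly the two assertions that constitute the theorem. The bracket in your formula for $V'$ is not of ambiguous sign: one has
$\sum_{i<j}\lambda_i^2\lambda_j^2+V\sum n_jn_k\lambda_i=\tfrac{1}{2}V^2\,\|\omega(t)\wedge d\eta\|_t^2$,
a sum of squares, equivalently $\varepsilon_t'=-\tfrac{1}{4\varepsilon_t}\|\omega(t)\wedge d\eta\|_t^2<0$. So $V=\varepsilon_t^2$ is \emph{always} strictly decreasing; your statement that ``$V$ need not be monotone'' in the mixed-sign case is false (the individual $\lambda_i$ need not be monotone, but $V$ is), and your worry about the negative term is empty. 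However, without recognizing this identity you cannot close the forward estimate either: the inequality $(V^{5/3})'\le -C$ needs precisely $\|\omega\wedge d\eta\|_t^2\ge\|d\eta\|_t^2$ and $\|d\eta\|_t^6\ge 6\,\|\tfrac{1}{6}(d\eta)^3\|_t^2$ (Lemma \ref{lem}), neither of which you prove. Backwards, your argument is simply incorrect: the claim that ``each $\lambda_i$ grows at most polynomially since $\lambda_i'\to0$ as the $\lambda$'s grow'' fails in the indefinite case, where for instance $\lambda_1\to\infty$ with $\lambda_2\lambda_3\to0$ gives $\lambda_1'\approx(\lambda_2\lambda_3)^{-2}\to\infty$. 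Ruling out exactly this scenario is the hard content of the ancient statement: one needs the monotonicity of $\varepsilon_t$ (so that no $\lambda_i$ can tend to $0$ backwards) combined with a mean-value/sequence contradiction argument, plus a separate concavity argument for $f=\lambda_1+\lambda_2+\lambda_3$ in the definite case. As you yourself concede, both the explicit bound on $T$ and the backward completeness are left open in your proposal --- but those are the theorem.
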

\begin{proof}
We are going to show that  the system \eqref{co-flow} turns out to be equivalent to the system of ODEs given by \eqref{flow2}. But first let us observe that the initial $\psi_0 = \star_0 \varphi_0$ is $H$-invariant and the system \eqref{co-flow} is invariant by diffeomorphisms, whence $H$-invariant too, and therefore the system \eqref{co-flow} reduces to a system of  ODEs on $\Lambda^4 \frak h^*$.  This ensures the existence of a unique $H$-invariant solution $\psi_t$ of \eqref{co-flow} for short times. Now let  $\varepsilon_t$ be the norm $\| \eta \|_t$ of $\eta$ with respect to the metric induced by $\psi(t)$.
We can write
$$\psi(t)=\frac{1}{2}\omega(t)^2 + \widehat \rho (t) \wedge\frac{1}{\varepsilon_t}\eta,$$
where  the pair  $(\omega(t),\rho(t))$ defines an  $\SU(3)-$structure on $\mathrm{Ker}(\eta)$ and $\widehat \rho (t) = J_t \rho(t)$. 
 In fact, if $x_t \in \frak h$ is the vector defined by $g_t(x_t,y)=\eta(y)$, for any $y\in \frak h$,
then $\mathrm{Ker}(\eta)=\left\{y\in\ {\frak h}\,|\,g_t(x_t,y)=0\right\}$ is the orthogonal complement of the span of $x_t$ with respect to $g_t$. Thus we can apply Proposition 4.5 in \cite{Schulte}.\par
With respect to  the decomposition \eqref{decomp} we have 
$$
\psi (t) =\psi^{(4)} (t) +\psi^{(3)} (t) \wedge\eta,
$$
so,
$$\psi^{(4)} (t)=\frac{1}{2}\omega(t)^2,\quad \psi^{(3)} (t)=\frac{1}{\varepsilon_t} \widehat \rho(t).$$
 Moreover,  the forms $\omega(t) \in \Lambda^2 \mathrm{Ker}(\eta)^*$ and  $\widehat \rho(t) \in \Lambda^3 \mathrm{Ker}(\eta)^*$ are closed.  
Since  $\frac{d}{dt}{\psi}(t)$  
 is exact, the cohomology class of $\psi(t)$ is fixed by the flow, and  hence
$$\frac{d}{dt}  {\psi}(t)= \frac{d}{dt}{\psi}^{(4)}(t)+ \frac{d}{dt}{\psi}^{(3)} (t) \wedge \eta\in d\Lambda^3{\frak h}^*\subseteq \Lambda^4\mathrm{Ker}(\eta)^*.$$
Therefore, $ \frac{d}{dt}{\psi}^{(3)} (t)=0$ and  $$ \widehat \rho (t)=\varepsilon_t\psi^{(3)} (0)=\varepsilon_t \widehat \rho_0.$$
Consequently, the  almost complex structure $J_t$ defined by $\rho(t)$  does not change along the flow, i.e. $J_t \equiv J_0$, where $J_0$ is the almost complex structure defined by $\rho_0$. Thus $\rho(t)=-J_0 \widehat  \rho (t) =\varepsilon_t \rho_0$  and
\begin{equation} \label{relationvarepsilon}  \frac{1}{6}\omega(t)^3=\frac{1}{4}\rho(t) \wedge\widehat \rho(t)=\varepsilon^2_t*_0(1), \end{equation}
where in the first equality we used the fact that $(\omega(t),\rho(t))$ defines an $\SU(3)$-structure on $\mathrm{Ker}(\eta)$.
 \par
Now let us compute the Laplacian of $\psi(t)$ with respect to the metric $g_t$:
$$
\begin{array}{lcl}
d\star_td \star_t \psi (t)&=&d\star_td \star_t(\frac{1}{2}\omega(t)^2 +\ \widehat \rho(t)\wedge \varepsilon^{-1}_t\eta)\\
&=&d\star_td(\omega (t) \wedge\varepsilon^{-1}_t\eta+ \rho(t))\\
&=&d\star_t(\varepsilon^{-1}_t\omega(t)\wedge d\eta)\\
&=&d\left(\varepsilon^{-2}_t*_t(\omega(t)\wedge d\eta)\wedge\eta\right)\\
&=&\varepsilon^{-2}_t *_t(\omega(t)\wedge d\eta)\wedge d\eta.
\end{array}
$$
On the other hand we have
$$\frac{d}{dt}\psi_t=\frac{d}{dt} \left ( \frac{1}{2}\omega(t)^2 \right ).$$
Thus, by $\frac{d} {d t} \psi(t) = -  \Delta_{t} \psi(t)$ we obtain
\begin{equation} \label{coflownew2}
\frac{1}{2}\left(\frac{d}{dt}\omega(t)^2\right)=-\varepsilon^{-2}_t *_t(\omega(t) \wedge d\eta)\wedge d\eta.\end{equation}
We observe that, being $d\psi(t)=0$,
$$0=\varepsilon_td \psi (t)=d\left(\varepsilon_t\frac{1}{2}\omega(t)^2+\hat{\rho}(t)\wedge\eta\right)= \widehat \rho (t) \wedge d\eta.$$ 
Since $\hat \rho$ is the imaginary part of a $(3,0)$-form,  $\eta$ must be of type $(1,1)$ and hence it is $J_0$-invariant,  i.e. $J_0 (d \eta) = d \eta$.

Fixing a frame $\left(x_1,\dots,x_6\right)$ of $\mathrm{Ker}(\eta)$ \and using $*_t \omega(t) = \frac{1}{2} \omega(t)^2$  we get
\begin{equation*}
\begin{array}{lcl}
*_t(d\eta\wedge\omega(t))&=& \sum_{1\leq i<j\leq 6}*_t((d\eta)_{ij}x^{ij}\wedge \omega(t))
\\
&=&- \sum_{1\leq i<j\leq 6}(d\eta)^{ij}x_i \lrcorner  x_j \lrcorner  *_t\omega(t)\\
&=&- \sum_{1\leq i<j\leq 6}(d\eta)^{ij}x_i \lrcorner x_j \lrcorner \frac 12 \omega(t)^2\\
&=&- \sum_{1\leq i<j\leq 6}(d\eta)^{ij}x_i  \lrcorner  ((x_j \lrcorner \omega(t))\wedge\omega(t)).
\end{array}
\end{equation*}
Moreover
$$
\begin{array}{lcl}
x_i\lrcorner((x_j\lrcorner\omega(t))\wedge\omega(t))&=&\omega (t) (x_j,x_i)\omega (t) -(x_j\lrcorner\omega(t))\wedge(x_i\lrcorner\omega(t))\\
&=&-\omega(t)_{ij}\omega(t)+(x_i\lrcorner\omega(t))\wedge(x_j\lrcorner\omega(t)).
\end{array}$$
Now, taking into account the fundamental relation $\omega(t) (x, y) = g_t ( x, J_0y)=-[(J_0)^*(x\lrcorner g)](y)$, we have
\begin{equation*}
x_i\lrcorner\omega(t)=-(J_0)^*\left(\sum_mg_{im}(t)x^m\right),\quad x_j\lrcorner\omega(t)=-(J_0)^*\left(\sum_ng_{jn}(t)x^n\right).
\end{equation*}
Therefore,
\begin{equation*}
\begin{array}{lcl}
*_t(d\eta\wedge\omega(t))&=& \sum_{1\leq i<j\leq 6}(d\eta)^{ij}\left[\omega(t)_{ij}\, \omega(t)-x_i \lrcorner \omega(t)\wedge x_j \lrcorner \omega(t)\right]\\
&=&\sum_{1\leq i<j\leq 6}(d\eta)^{ij}\left[\omega_{ij}(t)\,  \omega(t) -\sum_{m,n=1}^6 g_{im}(J_0^*x^m) \wedge g_{jn}(J_0^*x^n)\right]\\
&=& \sum_{1\leq i<j\leq 6}(d\eta)^{ij} \omega_{ij}(t)\,  \omega(t) -\sum_{m,n=1}^6(d\eta)_{mn}J_0^* x^m \wedge J_0^*x^n\\
&=&(d\eta, \omega(t))_t \, \omega(t) -J_0^* (d\eta)\\
&=&(d\eta, \omega(t))_t \, \omega(t) - d\eta,
\end{array}
\end{equation*}
 where $( \cdot, \cdot )_t$ denotes the scalar product induced by  $g_t$ and $\lrcorner$ is the contraction.

Therefore we can reformulate \eqref{coflownew2} as  
\begin{equation}\label{flow}
\frac{d}{dt}\left( \frac{1}{2} \omega(t)^2 \right)=-\varepsilon^{-2}_t\left[(d\eta, \omega(t))_t \, \omega (t) \wedge d\eta-d\eta \wedge d \eta \right],
\end{equation}\par
with $\omega (t)\in\Lambda^2\mathrm{Ker}(\eta)^*.$
Define on $\mathrm{Ker}(\eta)$  the  following bilinear form 
$$h(x,y)=d\eta(x,J_0y),\quad x,y\in\mathrm{Ker}(\eta).$$
Since $J_0 (d\eta)=d\eta$,  we have that $h$  is symmetric. If we consider $\mathrm{Ker}(\eta)$ as complex vector space through $J_0$,
 then both $g_0$ and $h$ define  
on $\mathrm{Ker}(\eta)$ sesquilinear forms $g^c$ and $h^c$, respectively. Clearly $g^c$ is positive definite  while $h^c$ is non-degenerate with  mixed signature. By the spectral theorem there exists a complex $g^c-$orthonormal basis $\left\{k_1,k_2,k_3\right\}$ of  the complex vector space $\mathrm{Ker}(\eta)$  such that
$$h^c(k_i,k_j)=\delta_{ij}\frac{n_i}{l_i},\quad n_i\in\left\{1,-1\right\},\;l_i> 0.$$
Therefore,  if  $\left\{k^1,k^2,k^3\right\}$ is the dual basis of   $\left\{k_1,k_2,k_3\right\}$, 
putting $f^i=\sqrt{l_i} \, k^i$, for $i=1,2,3$, we get 
$$d\eta=\sum_{i=1}^3 n_if ^i\wedge J_0 f^i,\quad \omega_0=\sum_{i=1}^3 l_i f^i\wedge J_0 f^i.$$ \par
In order to find  $\omega(t)$, let us suppose that it is given by 
\begin{equation} \label{expressioneomega_t} \omega(t)=\sum_{i=1}^3 \lambda_i(t) f^i\wedge J_0  f^i,
 \end{equation}
where  $\lambda_1(t),\lambda_2(t)$ and $\lambda_3(t)$  are  positive functions such that   
$\lambda_j(0) = l_j$. Then  $\omega(t)$ is a  non-degenerate $2$-form of type $(1,1)$ with respect to 
 $J_0$, and
$$
\frac{d}{dt} \left (  \frac 12 \omega (t)^2  \right ) =\sum_{i<j}\left\{{\lambda'_i}(t)\lambda_j(t)+\lambda_i(t){\lambda'_j}(t)\right\}f^i\wedge J_0 f^i\wedge f^j\wedge J_0 f^j.
$$  
Using \eqref{relationvarepsilon} and \eqref{expressioneomega_t}  we have 
$$\varepsilon_t^2= \frac 16 *_0\omega(t)^3 =\lambda_1(t)\lambda_2(t)\lambda_3(t).$$  Now, from the equation
\eqref{flow}  and $h^c(f_i, f_j) =  \delta_{ij} n_i$  we get
\begin{align*}
&\sum_{i<j}\left\{{\lambda'_i}(t)\lambda_j(t)+\lambda_i(t){\lambda'_j}(t)\right\}f^i\wedge J_0 f^i\wedge f^j\wedge J_0 f^j =\\
&-\frac{1}{\varepsilon^2_t}\sum_{i<j}\left[\left(\frac{n_1}{\lambda_1(t)}+\frac{n_2}{\lambda_2(t)}+\frac{n_3}{\lambda_3(t)}\right)\left(n_i\lambda_j(t)+n_j\lambda_i(t))\right)-2n_in_j\right]f^i\wedge J_0 f^i\wedge f^j\wedge J_0 f^j.\\
\end{align*}
This is the system of ordinary differential equations given by  \eqref{flow2}.
This system does indeed have a unique solution, which, in turn,
defines a non-degenerate 2-form compatible with $J_0$ by \eqref{expressioneomega_t}. 
Such a form has to satisfy \eqref{flow} by construction. Therefore, we find out that the solution of \eqref{co-flow} is given by 
$$
\psi(t)=\frac{1}{2}\omega (t)^2+  \widehat \rho (t) \wedge\frac{1}{\varepsilon_t}\,\eta,
$$
where   $\widehat \rho (t) = \varepsilon_t\widehat{\rho}(0)$ and  $\omega(t)$ as in   \eqref{expressioneomega_t}, with the   functions $\lambda_i (t)$, $i = 1,2,3$,  solving \eqref{flow2}.
\par
 Let $(\tau,T)$ be the maximal interval of existence of $\psi(t)$, where $-\tau,T\in[-\infty,+\infty]$. We want to prove that $T<+\infty$ and $\tau=-\infty$. Computing the derivative  of  $6\,\varepsilon_t^2=*_0(\omega(t)^3)$  we get  $$12 \, \varepsilon_t \varepsilon'_t=3*_0 \left ( \frac{d}{dt} {\omega}(t) \wedge\omega(t)^2 \right ).$$ 
Then,
\begin{equation*}
\begin{array}{lcl}
\varepsilon'_t&=&-\frac{\varepsilon^2_0}{4\varepsilon^{3}_t}*_0\left[*_t\left(\omega(t)\wedge d\eta\right)\wedge \omega(t) \wedge d\eta\right]\\[4pt]
&=&-\frac{\varepsilon^2_0}{4\varepsilon^{3}_t}*_0(\| \omega(t)\wedge d\eta \|^2_t*_t(1))\\[4pt]
&=&-\frac{1}{4\varepsilon^{3}_t}*_0 (\| \omega(t)\wedge d\eta \|^2_t \, \varepsilon^2_t \, *_0(1))\\[4pt]
&=&-\frac{1}{4\varepsilon_t} \| \omega(t)\wedge d\eta \|^2_t.\\
\end{array}
\end{equation*}
This implies that $\varepsilon_t'<0$ and also
 the existence of $\lim_{t\rightarrow T}\varepsilon_t=\varepsilon_T\geq 0$.   
Note that $\varepsilon_t^{-1} \eta$ is the unit vector orthogonal to ${\mbox{Ker}}(\eta)$
such that  $\star_t(1)=*_t(1)\wedge\left(\varepsilon_t^{-1}\eta\right)$, thus\begin{equation}\label{stardiuno} *_t (1) =  \star_t  \left (\frac{1}{\varepsilon_t}\eta \right ),  \end{equation} where $\star_t$ is the  Hodge star operator with respect to the metric induced by $\psi(t)$.   Moreover, the volume form $*_0 (1)$ is proportional to the $6$-form $d \eta^3$, since both are non-zero $6$-forms on ${\mbox{Ker}}(\eta)$.
Therefore,  we can write 
\begin{equation} \label{exprvoldeta}
 *_0 (1) = \frac{1}{ 6 \delta_0} (d \eta)^3,
\end{equation}
 where 
 $$
\delta_0=  \frac 16 \|  (d\eta)^3 \|_0.
$$
Thus,  using  \eqref{stardiuno}, \eqref{exprvoldeta} and 
  $*_t (1) = \frac 16 \omega(t)^3 = \varepsilon_t^2 *_0 (1)$, we obtain
$$\star_t  \left  (\frac{1}{\varepsilon_t}\eta \right )=*_t(1)={\varepsilon_t^2}*_0(1)= \frac{1}{6}  \frac{\varepsilon_t^2}{\delta_0}(d\eta)^3,
$$
that is
$$
\star_t   \left(  \varepsilon_t^{-1} \eta \right) = \frac{1}{6}\frac{\varepsilon_t^2}{\delta_0} d \eta^3.
$$
Taking the square norm of the previous expression gives
$$
1=\|  \star_t   \left(  \varepsilon_t^{-1} \eta \right)  \|_t^2  = \left\|\frac{1}{6}   \frac{\varepsilon_t^2}{\delta_0} d \eta^3 \right\|_t^2,
$$
whence
$$
\| d\eta^3 \|_t^2 =  36 \frac{\delta_0^2}{\varepsilon_t^4}.
$$
 Now by  Lemma \ref{lem}  we  have  
 $$ 36 \frac{\delta_0^2}{\varepsilon_t^4} = \| d \eta^3 \|_t^2 \leq 6 \| d \eta \|_t^6,$$
and
$$ \| \omega (t) \wedge d\eta \|^2_t\geq  \| d\eta \|^2_t.$$
Therefore, we can estimate $\varepsilon'_t$ as follows
$$\varepsilon'_t=-\frac{1}{4\varepsilon_t} \| \omega(t) \wedge d\eta \|^2_t\leq -\frac{1}{4\varepsilon_t} \| d\eta||_t^2\leq -\frac{1}{4\varepsilon_t}\left(\frac 16    \| (d\eta)^3 \|^2_t\right)^{\frac{1}{3}}=-\frac{\sqrt[3]{6\,\delta_0^2}}{4\varepsilon_t\sqrt[3]{\varepsilon_t^4}}=-\frac{C}{\varepsilon_t^{1+\frac{4}{3}}},$$
with $C=\frac{\sqrt[3]{6\,\delta_0^2}}{4}$.
As a consequence, if $t\in(0,T)$, we get 
$$\varepsilon_t-1=\int_0^t\varepsilon'_sds\leq-C\int_{0}^t\frac{1}{\varepsilon_s^{1+\frac{4}{3}}}ds\;\Rightarrow\;t=\int_{0}^tds\leq\int_{0}^t\frac{1}{\varepsilon_s^{1+\frac{4}{3}}}ds\leq\frac{1-\varepsilon_t}{C},$$
where we have used that $\varepsilon_s<1$ if $s\in(0,t)$.  So 
$$T\leq\frac{1-\varepsilon_T}{C}=4\frac{1-\varepsilon_T}{\sqrt[3]{6\,\delta_0^2}}\leq \frac{4}{\sqrt[3]{6\,\delta_0^2}}.$$\par
It remains to show that $\tau=-\infty$. 
Firstly, we prove that it is true if 
$h$ is positive or negative definite. Note that in this case $n_in_j=1,$ for  every $i,j =1,2,3$. 
So $\lambda'_i (t)<0,$ for any $i=1,2,3$. Define  $$f(t)=\lambda_1(t)+\lambda_2(t)+\lambda_3(t). $$ Then it is clear that the solution 
exists as long as $f(t)<+\infty$ and, consequently,  $f(\tau)=+\infty.$ Now  observe that by \eqref{flow2} $$
\begin{array}{lcll}
f''(t) &=&-\frac{d}{dt} \left (\sum_{a,b,c} \frac{\lambda_a^2+\lambda_b\lambda_c(t)}{\lambda_1^2\lambda_2^2\lambda_3^2} \right )\\[4pt]
&& =-\sum_{a,b,c}\frac{(2\lambda_a \lambda'_a + \lambda'_b \lambda_c +\lambda_b \lambda_c')\lambda_1^2\lambda_2^2\lambda_3^2-2\sum_{i,j,k}(\lambda_i \lambda'_i (t)\lambda_j^2\lambda_k^2)(\lambda_a^2+\lambda_b\lambda_c)}{\lambda_1^4\lambda_2^4\lambda_3^4},\\
\end{array}
$$
where $(a,b,c)$ and $(i,j,k)\in\{(1,2,3), (2,3,1), (3,1,2)\}.$
But, from \eqref{flow2}, for any  $(a,b,c),$  it follows
$$2\lambda_a \lambda_a' \lambda_1^2\lambda_2^2\lambda_3^2-2(\lambda_a \lambda_a' \lambda_b^2\lambda_c^2)\lambda_a^2=0,$$
and 
\begin{align*}
&\lambda_b' \lambda_c\lambda_1^2\lambda_2^2\lambda_3^2-2(\lambda_b \lambda_b' \lambda_a^2\lambda_c^2)\lambda_b\lambda_c=\lambda_b' \lambda_1^2\lambda_2^2\lambda_3^2(\lambda_c-2\lambda_c)>0,\\
& \lambda_c' \lambda_b\lambda_1^2\lambda_2^2\lambda_3^2-2(\lambda_c \lambda_c' \lambda_a^2\lambda_b^2)\lambda_b\lambda_c= \lambda_c' \lambda_1^2\lambda_2^2\lambda_3^2(\lambda_b-2\lambda_b)>0.
\end{align*}
Therefore  $ f''(t)<0,$ for $t\in (\tau, T)$. 
 But, for $t\in(\tau,0),$ 
$$f (0) -f (t)=\int_t^0 f'(s) ds\geq \int_t^0 f'(0) ds=-t f'(0).$$
Thus, $$
f (t) \leq f (0)+t f'(0),$$
which means $\tau=-\infty$.\par
 In order to prove that $\tau=-\infty$ if $h$ is indefinite, we proceed by contradiction as follows.  Suppose by contradiction that $\tau >-\infty$ and that $\lambda_1(t),\lambda_2(t)$ and $\lambda_3(t)$ are all bounded near $\tau$. Then, we can find a sequence $t_n \to \tau$ for which all $\lambda_i (t_n)$ converge. If the limits of $\lambda_i (t_n)$ are non-zero we can restart the flow past $\tau$, contradicting the maximality of the solution. Therefore, if $\tau>-\infty$, at least one of the $\lambda_i (t_n)$ has to go to zero for $t_n \to \tau$.  Since $\lambda_1 (t) \lambda_2 (t) \lambda_3 (t) = 1/6  *_0(\omega^3_t)={\varepsilon}_t^2$ decreases, we get  also a contradiction. Indeed,
 $$
0  = \lambda_1 (\tau) \lambda_2 (\tau) \lambda_3 (\tau)  = \lim_{t \to \tau} \varepsilon_t^2 \geq \epsilon_0^2>0.
$$
Therefore, if 
$\tau> - \infty$, there is at least one $\lambda_{i}(t)$ $(i=1, 2, 3)$ which is
unbounded. Suppose now that $\lambda_2(t)$ is unbounded. Then, choosing a sequence of negative times $\left\{t_n\right\}$ converging to $\tau$  and such that $\lambda_2(t_{n})-\lambda_2(t_{n-1})$ diverges, it follows that
$$
\begin{array}{lcl}
\lambda_2(t_n)-\lambda_2(t_{n-1})&=&-\int_{t_{n-1}}^{t_n}\frac{\lambda_1 (s)\lambda_3(s)-\lambda_2(s)^2}{\lambda_1(s)^2\lambda_2(s)^2\lambda_3(s)^2}ds\\[4pt]
&=& -\int_{t_{n-1}}^{t_n} \left (\frac{1}{\lambda_1(s)\lambda_3(s)\lambda_2(s)^2}-\frac{1}{\lambda_1(s)^2\lambda_3(s)^2} \right)ds\\[4pt]
&=&\left(-\frac{1}{\varepsilon^2(\bar{t}_n)\lambda_2(\bar{t}_n)}+\frac{1}{\lambda_1^2(\bar{t}_n)\lambda_3^2(\bar{t}_n)}\right)(t_n-t_{n-1})\rightarrow+\infty,
\end{array}
$$
where $\bar{t}_n\in(t_n,t_{n-1})$. 
Hence $\lambda_1(\bar{t}_n)\lambda_3(\bar{t}_n)\rightarrow 0$.
Indeed ${\varepsilon(\bar{t}_n)^2}\lambda_2(\bar{t}_n)$  stays away from zero. But for $n$ large we get a contradiction
$$0>\frac{\lambda_2(t_n)-\lambda_2(t_{n-1})}{t_n-t_{n-1}}=-\frac{\lambda_1(\bar{t}_n)\lambda_3(\bar{t}_n)-\lambda_2^2(\bar{t}_n)}{\lambda_1^2(\bar{t}_n)\lambda_2^2(\bar{t}_n)\lambda_3^2(\bar{t}_n)}>0.$$
 Thus $\lambda_2(t)$ must be bounded. 
The same argument shows that $\lambda_3(t)$ must be bounded as well. So, the only possibility is that $\lambda_1(t)$  is unbounded whereas both $\lambda_2(t)$
and $\lambda_3(t)$ are bounded. As done previously choose $\left\{t_n\right\}$ so that $t_n\rightarrow\tau$ and
$$\lambda_1(t_n)-\lambda_1(t_{n-1})=-\int_{t_{n-1}}^{t_n}\frac{\lambda_2 (s) \lambda_3(s) +\lambda_1(s)^2}{\lambda_1(s)^2\lambda_2(s)^2\lambda_3(s)^2}ds\rightarrow+\infty.$$
Then there exists $\bar{t}_n\in(t_n,t_{n-1})$ such that $\lambda_2(\bar{t}_n)\lambda_3(\bar{t}_n)\rightarrow 0$. We can certainly 
assume that $\lambda_2(\bar{t}_n)\lambda_3(\bar{t}_n)$ decreases in $n$ by choosing a suitable subsequence which we  will  still  denote by $t_n$.  Then,
$$0>\lambda_2 (t_n) \lambda_3({t}_n)- \lambda_2 (t_{n-1} \lambda_3)({t}_{n-1})=\left(\frac{d}{dt}(\lambda_2\lambda_3)\right)  (s_n) (t_n-t_{n-1}),$$
for some $s_n\in(t_n,t_{n-1})$. On the other hand, by \eqref{flow2}, it turns out that 
$$\frac{d}{dt} \left (\lambda_2 (t) \lambda_3(t) \right) =-\frac{\lambda_1 (t)(\lambda_2(t)^2+\lambda_3(t)^2)-\lambda_2(t)\lambda_3(t)(\lambda_2(t)+\lambda_3(t))}{\lambda_1(t)^2\lambda_2(t)^2\lambda_3(t)^2}.$$
Since  
$$\lambda_1(t_n)\rightarrow+\infty,\quad\frac{\lambda_2(t_n)\lambda_3(t_n)(\lambda_2({t}_n)+\lambda_3(t_n))}{\lambda_2^2(t_n)+\lambda_3^2(t_n)}\rightarrow 0,$$
we obtain that  $\frac{d}{dt} \left(\lambda_2\lambda_3\right)(s_n)<0,$ for $n$ large. 
Then we get the following contradiction:
 $$0>\lambda_2\lambda_3({t}_n)-\lambda_2\lambda_3({t}_{n-1})=\left(\frac{d}{dt}\lambda_2\lambda_3\right)(s_n)(t_n-t_{n-1})>0.$$
Thus also $\lambda_1(t)$ must be bounded. But we have already proved that, assuming $\tau>-\infty$, at least one $\lambda$ must be unbounded. To avoid any contradiction it must be $\tau=-\infty$. This completes the proof.
\end{proof}

We  now   solve the coflow \eqref{co-flow} on the $7$-dimensional Heisenberg  group 
when the initial coclosed $\Gtwo$ form is equal to  $\varphi_i$ $(i=1, 2)$,  where $\varphi_1$ and $\varphi_2$ are defined by 
\begin{equation}\label{ex1}
\varphi_1=e^{127}+e^{347}+e^{567}+e^{135}-e^{146} -e^{236}-e^{245}.
\end{equation} and 
\begin{equation} \label{ex3}
\varphi_2=e^{127}-e^{347}-e^{567}+e^{135}-e^{146}+e^{236}+e^{245},
\end{equation}
respectively.  Note  that $\varphi_1$ and $\varphi_2$ induce the same metric and orientation, namely they are $\mathrm{SO}(7)$-equivalent via the special orthogonal transformation
$$
R = {\mbox{diag}}(1,1,1, -1,-1,-1,1).
$$
Moreover,   their dual $4$-forms are given respectively by the closed forms
$$
\star \varphi_1= e^{1234} + e^{1256} + e^{3456} - e^{2467} + e^{1367} + e^{1457} + e^{2357}
$$
and
$$
\star \varphi_2= -e^{1234} - e^{1256} + e^{3456} - e^{2467} - e^{1367} - e^{1457} + e^{2357}.
$$

We will show in the next section that the behavour of the solution for the modified coflow is different.

\begin{corollary} \label{th5.1} The 
 solution of the Laplacian coflow \eqref{co-flow} on $H$ 
with the initial coclosed $\Gtwo$ form $\varphi_1$, defined by \eqref{ex1}, is given by 
\begin{equation}\label{sol1-1}
\varphi(t)=\frac{1}{y(t)}\,(e^{127}+e^{347}+e^{567})+y(t)^3\,(e^{135}-e^{146}-e^{236}-e^{245}), \qquad  t\in \left (-\infty,\,\frac{3}{5}\right),
\end{equation}
where $y=y(t)$ is the positive function
\begin{equation}\label{sol1}
y(t)=\sqrt[10]{1-\frac{5}{3}t}.
\end{equation}
The underlying metrics $g_t$ of this solution converge smoothly, up to pull-back by time-dependent diffeomorphisms, to a flat metric, uniformly on compact sets 
in $H$ as t goes to $- \infty$. 
\end{corollary}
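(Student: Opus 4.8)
This statement is a corollary of Theorem~\ref{Hflow}, so the plan is to specialise that theorem to the initial datum $\varphi_1$, integrate the flow explicitly, read off the resulting family of metrics, and then analyse its asymptotics through the bracket flow. First I would identify the $\SU(3)$ data attached to $\varphi_1$. Since $\|e^7\|_0=1$ we may take $\eta=e^7$, and then $\varphi_1=\omega_0\wedge\eta+\rho_0$ with $\omega_0=e^{12}+e^{34}+e^{56}$ and $\rho_0=e^{135}-e^{146}-e^{236}-e^{245}$, while \eqref{eq:lieheisenberg} gives $d\eta=\tfrac{1}{\sqrt 6}\,\omega_0$. The decisive observation is that $d\eta$ is a \emph{positive} multiple of $\omega_0$: the bilinear form $h(x,y)=d\eta(x,J_0y)=\tfrac{1}{\sqrt 6}\,g_0(x,y)$ is then positive definite, so $\varphi_1$ lies in the definite case of Theorem~\ref{Hflow} (whence $\tau=-\infty$), and the three initial eigenvalues coincide, $\lambda_1(0)=\lambda_2(0)=\lambda_3(0)$. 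By uniqueness for the permutation-symmetric system \eqref{flow2} the solution stays in the symmetric locus $\lambda_1(t)=\lambda_2(t)=\lambda_3(t)$, i.e. $\omega(t)=u(t)\,\omega_0$ for a single positive scalar $u(t)$ with $u(0)=1$.

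Next I would integrate the flow. Plugging the ansatz $\omega(t)=u(t)\,\omega_0$ into \eqref{coflownew2} (equivalently, collapsing \eqref{flow2} under $\lambda_1=\lambda_2=\lambda_3$), and using that $6\,\varepsilon_t^2=*_0\omega(t)^3$ here reads $\varepsilon_t^2=u(t)^3$, together with the conformal scaling $*_t=u^{-1}*_0$ of the six-dimensional Hodge star on $4$-forms, the vector equation collapses to the scalar ODE $u'(t)=-\tfrac13\,u(t)^{-4}$. Integrating gives $u(t)^5=1-\tfrac53 t$; putting $y(t)=\sqrt{u(t)}$ yields $y(t)=\sqrt[10]{1-\tfrac53 t}$ and the maximal interval $(-\infty,\tfrac35)$, the right endpoint being the time at which $u$, and hence $\varepsilon_t$ and the volume, degenerate to zero. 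To reconstruct $\varphi(t)=\star_t\psi(t)$ I would note that $\omega(t)=y^2\omega_0$ scales the six-dimensional metric by $y^2$, while $\varepsilon_t=\|e^7\|_t=y^3$ scales the $e^7$-direction by $y^{-3}$; hence $\{y e^1,\dots,y e^6,\,y^{-3}e^7\}$ is a $g_t$-orthonormal coframe, in which $\varphi(t)$ is the standard $\Gtwo$ form \eqref{eqn:3-forma G2}. Rewriting in the fixed coframe $\{e^1,\dots,e^7\}$ produces exactly \eqref{sol1-1}. I expect the constant-chasing in this reduction --- tracking the powers of $u$ coming from $\varepsilon_t^2=u^3$ and from the conformal scaling of $*_t$ --- to be the only delicate point of the first part, since a slip there changes the coefficient $\tfrac53$.

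For the convergence statement I would pass to the bracket-flow description on $\R^7$ outlined in the Introduction. The frame dual to the orthonormal coframe above is $X_i=y^{-1}e_i$ for $1\le i\le 6$ and $X_7=y^3 e_7$, which is $g_t$-orthonormal; since the only nonzero brackets of $\mathfrak{h}$ are $[e_1,e_2]=[e_3,e_4]=[e_5,e_6]=-\tfrac{1}{\sqrt6}\,e_7$, one finds $[X_1,X_2]=[X_3,X_4]=[X_5,X_6]=-\tfrac{1}{\sqrt6}\,y^{-5}\,X_7$ and all other brackets vanish. As $t\to-\infty$ we have $y\to+\infty$, so every structure constant tends to $0$ and the associated nilpotent bracket $\mu(t)$ converges in $\mathcal{N}\subset(\Lambda^2\R^7)^*\otimes\R^7$ to the abelian bracket $\mu_\infty=0$. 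By \cite[Proposition~2.8]{La09} this convergence of brackets is equivalent to $C^\infty$ convergence, uniformly on compact sets and up to pull-back by time-dependent diffeomorphisms, of the metrics $g_t$ to the left-invariant metric determined by $\mu_\infty$, which is the flat metric on $\R^7$. The point to verify with care here is the exponent $y^{-5}$ governing the decay of the structure constants, since it is precisely what forces $\mu(t)\to 0$ rather than convergence to a nonabelian limit.
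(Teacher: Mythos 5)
Your proposal is correct, and for the existence/formula part it takes a genuinely different route from the paper. The paper does not deduce Corollary \ref{th5.1} from Theorem \ref{Hflow} at all: it verifies the candidate solution directly, introducing the $g_t$-orthonormal coframe $f^i=y(t)\,e^i$ ($1\leq i\leq 6$), $f^7=y(t)^{-3}e^7$, computing on one hand $\frac{d}{dt}\big(\star_t\varphi(t)\big)=4y^3y'\,(e^{1234}+e^{1256}+e^{3456})$ and on the other $-\Delta_t\star_t\varphi(t)=-\frac{2}{3}y^{-6}(e^{1234}+e^{1256}+e^{3456})$, and matching the two, which forces $y^{10}=1-\frac{5}{3}t$. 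You instead specialize the $\SU(3)$ reduction from the proof of Theorem \ref{Hflow} to this initial datum: since $d\eta=\frac{1}{\sqrt{6}}\,\omega_0$, the form $h$ is definite, the symmetric locus $\omega(t)=u(t)\,\omega_0$ is preserved by uniqueness, and your scalar equation $u'=-\frac{1}{3}u^{-4}$, obtained from \eqref{coflownew2} using $\varepsilon_t^2=u^3$ and $*_t=u^{-1}*_0$ on $4$-forms, is exactly right; it integrates to $u^5=1-\frac{5}{3}t$, i.e.\ $y=\sqrt{u}$ as in \eqref{sol1}, and the reconstruction of $\varphi(t)$ from the orthonormal coframe reproduces \eqref{sol1-1}. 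Your derivation explains where the solution comes from and builds in uniqueness within the invariant class, whereas the paper's guess-and-check is shorter but gives no indication of how the ansatz was found. For the convergence statement the two arguments coincide: your bracket computation, with structure constants proportional to $y^{-5}\to 0$, is precisely the dual of the paper's structure equations \eqref{new-eq:H}, followed by the same appeal to \cite{La11} and \cite[Proposition 2.8]{La09}.

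One warning: drop the parenthetical claim that your ODE is ``equivalently'' obtained by collapsing \eqref{flow2} under $\lambda_1=\lambda_2=\lambda_3$. As printed, \eqref{flow2} normalizes $n_j\in\{1,-1\}$ while simultaneously using $\varepsilon_t^2=\lambda_1\lambda_2\lambda_3$, and these two normalizations cannot hold in the same frame here: in any $g_0$-orthonormal frame the eigenvalue coefficients of $d\eta=\frac{1}{\sqrt{6}}\,\omega_0$ are $\frac{1}{\sqrt{6}}$, not $\pm 1$. Collapsing \eqref{flow2} literally, with $\lambda_i(0)=\omega_0(f_{2i-1},f_{2i})=1$, gives $\lambda'=-2\lambda^{-4}$ and hence singular time $\frac{1}{10}$ rather than $\frac{3}{5}$; the discrepancy is exactly the factor $\frac{1}{6}$, the square of the eigenvalue of $d\eta$ that \eqref{flow2} normalizes to $1$. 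So the corollary does not follow from \eqref{flow2} as stated (a normalization slip in the theorem itself), and it is your honest constant-tracking through \eqref{coflownew2} --- precisely the ``delicate point'' you flagged --- that makes your argument land on the correct coefficient $\frac{5}{3}$.
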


\begin{proof} 
For each $t\in(-\infty, \frac{3}{5})$, we consider the basis $\{f^1(t),\dots,f^7(t)\}$ of left invariant 1-forms
on $H$ defined by
\begin{equation}\label{eqn:new basis}
\begin{aligned}
f^i&=f^i(t)=y(t)\,e^i,\qquad 1\leq i\leq 6, \\
f^7&=f^{7}(t)=y(t)^{-3}\,e^7, 
\end{aligned}
\end{equation}
where the function $y=y(t)$ is given by \eqref{sol1}. Then, $f^i(0)=e^i$, for $i\in\{1, \cdots, 7\}$, and the structure equations of $H$, with respect to 
the basis $\{f^1(t),\dots,f^7(t)\}$, are 
\begin{equation} \label{new-eq:H}
df^i=0, \quad 1\leq i\leq 6,   \qquad df^7=\frac{\sqrt{6}}{6}\,y(t)^{-5}(f^{12}+f^{34}+f^{56}).
\end{equation}
Now, for any $t$, the 3-form $\varphi(t)$ defined by \eqref{sol1-1} has the following expression
\begin{equation}\label{sol-H-1-1}
\begin{aligned}
\varphi(t)&=f^{127}+f^{347}+f^{567}+f^{135}-f^{146} -f^{236}-f^{245}.
\end{aligned}
\end{equation}
Note that $\varphi(0)=\varphi_{1}$ and, for any $t$,  the 3-form $\varphi(t)$ on $H$ induces the metric 
$g_t$ such that the coframe $\{f^1(t),\dots,f^7(t)\}$ of $\frak {h}^*$ is orthonormal. Denote by $\star_{t}$ the Hodge  star operator
determined by $g_t$. Using \eqref{eqn:3-forma G2}, \eqref{eqn:4-forma G2} and \eqref{new-eq:H}, we have $d\,\star_{t}\varphi(t)=0$, where
the 4-form 
$$
\star_{t}\,\varphi(t)=f^{1234}+f^{1256}+f^{1367}+f^{1457}+f^{2357}-f^{2467}+f^{3456}.
$$
So, in terms of the coframe $\{e^1,\dots, e^7\}$ of $\frak {h}^*$, $\star_{t}\,\varphi(t)$ has the following expression
\begin{equation*}
\begin{aligned}
\star_{t}\,\varphi(t)&= y(t)^4(e^{1234}+e^{1256}+e^{3456}) + e^{1367} + e^{1457} + e^{2357} -e^{2467}.
\end{aligned}
\end{equation*}
Thus,
\begin{eqnarray} \label{eq:5-1}
\frac{d}{dt} \left (\star_{t}\,\varphi(t) \right )= 4y(t)^3\,y'(t)\,(e^{1234}+e^{1256}+e^{3456}).
\end{eqnarray}
Moreover, using \eqref{new-eq:H} and \eqref{sol-H-1-1}, we have
$$
\begin{array}{lcl}
-\Delta_t\star_{t}\varphi(t)&=&-d\star_{t}d \varphi(t)=-\frac{\sqrt{6}}{3} y(t)^{-5}\, d\star_{t}\left(f^{1234}+f^{1256}+f^{3456}\right)\\[3pt]
&=&-\frac{2}{3} y(t)^{-10} \left(f^{1234}+f^{1256}+f^{3456}\right),\\[3pt]
\end{array}
$$
or, equivalently,
$$
\begin{array}{lcl}
-\Delta_t\star_{t}\varphi(t)&=& -\frac{2}{3} y(t)^{-6} \left(e^{1234}+e^{1256}+e^{3456}\right).
\end{array}
$$
The last equality and \eqref{eq:5-1} prove that  \eqref{sol1-1} is the solution of the coflow \eqref{co-flow} when the function $y=y(t)$ is given by \eqref{sol1}.

We study the behavior of the underlying metric $g_t$ of the solution $\varphi(t)$ in the limit for  $t  \to - \infty$.  The limit 
can be computed fixing the $\Gtwo$-structure and changing
the Lie bracket as in \cite{La11}.  If we evolve the Lie brackets $\mu (t)$  instead of the 3-form
defining the $\Gtwo$-structure,  the corresponding bracket flow has
a solution for every $t$. Indeed, if we fix on $\R^7$  the 3-form $f^{127}+f^{347}+f^{567}+f^{135}-f^{146} -f^{236}-f^{245},$  then
 the basis  $(f_1(t), \ldots, f_7(t))$ defines, for every  $t <3/5$,
a nilpotent Lie algebra with bracket  $\mu(t)$ such that $\mu(0)$  is the Lie bracket of  $\frak h$.
Moreover, the solution converges to the null bracket corresponding to the abelian
Lie algebra. 
For this, let $\{f_{1}(t),\dots,f_{7}(t)\}$ be the basis dual to $\{f^1(t),\dots,f^7(t)\}$ (defined by \eqref{eqn:new basis}). Then,  
the equations \eqref{new-eq:H} imply that all the Lie brackets $[f_{i}(t), f_{j}(t)]$ $(1\leq i \leq j \leq 7)$ vanish excepting  
$$
[f_{1}(t), f_{2}(t)] = [f_{3}(t), f_{4}(t)] = [f_{5}(t), f_{6}(t)] = - \frac{\sqrt{6}}{6}\,y(t)^{-5} f_{7}(t).
$$
Thus, all the Lie brackets $[f_{i}(t), f_{j}(t)]$ tend to zero as $t$ goes to $- \infty$.
\end{proof}

\medskip
In a similar way   we can prove  the following 

\begin{corollary} \label{th5.2} 
The solution of the Laplacian coflow  \eqref{co-flow} on $H$ 
with initial coclosed $\Gtwo$ form $\varphi_2$, defined by \eqref{ex3},  
is ancient and it is given by
\begin{equation}\label{sol2-1}
 \varphi(t) = \frac{y(t)}{z(t)^2}\,e^{127}-\frac{1}{y(t)}\,e^{347}-\frac{1}{y(t)}\,e^{567}+y(t)z(t)^2\left(e^{135}-e^{146}+e^{236}+e^{245}\right),
\end{equation}
where the functions $y=y(t)$ and $z=z(t)$ satisfy
\begin{equation}\label{sol2}
\begin{cases}
\frac{d}{dt}y(t)=-\frac{1}{12}\frac{y(t)^4+z(t)^4}{y(t)^5 z(t)^8}, \,& \,\, \frac{d}{dt}z(t) = \frac{1}{12}\frac{z(t)^2-y(t)^2}{y(t)^4 z(t)^7},\\
y(0)=1, \,& \,\,   z(0)=1.
\end{cases}
\end{equation}
\end{corollary}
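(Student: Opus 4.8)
The plan is to mimic the proof of Corollary~\ref{th5.1} step by step, the only change being that the single conformal factor used there must be replaced by a pair of scale functions $y(t),z(t)$, since the shape of $\varphi_2$ in \eqref{ex3} is less symmetric. I would first record that, writing $\varphi_2$ with $e^7$ as distinguished direction, its $\SU(3)$-part on $\mathrm{Ker}(e^7)$ has the two-form $e^{12}-e^{34}-e^{56}$, whereas $d\eta$ is a positive multiple of $e^{12}+e^{34}+e^{56}$; hence the symmetric form $h(x,y)=d\eta(x,J_0y)$ has signature $(+,-,-)$. This is exactly the \emph{indefinite} case $n_1n_2=n_1n_3=-n_2n_3=-1$ of Theorem~\ref{Hflow}, and the interchangeable roles of the $e^{34}$- and $e^{56}$-planes are what collapse the three eigenvalue functions $\lambda_i$ of \eqref{flow2} into the two functions $y,z$ of \eqref{sol2}. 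Concretely, for each $t$ I would introduce the left invariant coframe given by $f^1=y\,e^1$, $f^2=y\,e^2$, $f^3=z\,e^3$, $f^4=-z\,e^4$, $f^5=z\,e^5$, $f^6=-z\,e^6$, $f^7=(yz^2)^{-1}e^7$, with $y=y(t),\,z=z(t)$ and $y(0)=z(0)=1$. A direct sign check shows that in this coframe the form \eqref{sol2-1} is \emph{exactly} the standard $\Gtwo$ form $f^{127}+f^{347}+f^{567}+f^{135}-f^{146}-f^{236}-f^{245}$, so that $\{f^i(t)\}$ is $g_t$-orthonormal and $g_t$ is precisely the metric induced by $\varphi(t)$.

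Next I would transport the structure equations \eqref{eq:lieheisenberg} into this coframe, obtaining $df^i=0$ for $1\le i\le 6$ and a single equation $df^7=\frac{\sqrt 6}{6}(yz^2)^{-1}\big(y^{-2}f^{12}-z^{-2}f^{34}-z^{-2}f^{56}\big)$, whose coefficients are signed monomials in $y,z$. From here the computation runs parallel to Corollary~\ref{th5.1}: one computes $\star_t\varphi(t)$, checks it is closed (so $\varphi(t)$ stays coclosed), differentiates it in $t$ to produce $\frac{d}{dt}\star_t\varphi(t)$, and computes $-\Delta_t\star_t\varphi(t)=-d\star_t d\varphi(t)$ using the transported structure equations. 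Matching the two resulting $4$-forms coefficientwise, together with $y(0)=z(0)=1$, yields precisely the coupled system \eqref{sol2}; since that system is smooth and autonomous it has a unique maximal solution, and the uniqueness part of the coflow \eqref{co-flow} then identifies \eqref{sol2-1} as \emph{the} solution. I expect the genuine work to lie in this matching: unlike in Corollary~\ref{th5.1}, the Hodge dual now mixes the two distinct scales $y,z$ and several signs forced by \eqref{ex3}, so the bookkeeping in $\star_t$ and in $\Delta_t$ must be done carefully in order to recover the exact factors $\pm\frac{1}{12}$ appearing in \eqref{sol2}.

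Finally, for the assertion that the solution is ancient I would simply invoke Theorem~\ref{Hflow}: $\varphi_2$ is a left invariant coclosed $\Gtwo$ form on $H$, so its coflow is automatically defined on an interval $(-\infty,T)$ with $0<T<4/\sqrt[3]{6\,\|d\eta\|_0^2}$, and the past-completeness $\tau=-\infty$ is exactly the indefinite-$h$ case already settled there. Alternatively one reads the same conclusion off \eqref{sol2} directly: a monomial in $y,z$ equal to $\varepsilon_t^2=\lambda_1\lambda_2\lambda_3$ is monotone decreasing in $t$, hence bounded away from $0$ as $t\to\tau^-$, so none of the scale functions can vanish; combined with the unboundedness/boundedness estimates used for the $\lambda_i$ in the proof of Theorem~\ref{Hflow}, this forces $\tau=-\infty$. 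The only obstacle worth flagging, beyond the sign-and-scale bookkeeping above, is verifying that the reduction from the three functions $\lambda_i$ to the two functions $y,z$ keeps both $y$ and $z$ positive throughout $(-\infty,T)$, so that the adapted coframe remains well defined; this follows from the monotonicity of $\varepsilon_t^2=yz^2$-type quantity exactly as in Theorem~\ref{Hflow}.
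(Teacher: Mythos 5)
Your proposal is correct and follows essentially the same route as the paper: the paper proves Corollary~\ref{th5.2} ``in a similar way'' to Corollary~\ref{th5.1}, i.e.\ via a time-dependent adapted coframe with two scale functions (exactly the coframe used again in the proof of Theorem~\ref{Th6.2}, up to your harmless sign flip on $f^4,f^6$), matching $\frac{d}{dt}\star_t\varphi(t)$ against $-d\star_t d\varphi(t)$ to obtain \eqref{sol2}, while ancientness is exactly the indefinite-$h$ case of Theorem~\ref{Hflow}. Your coefficient bookkeeping checks out (the matching does reproduce the factors $\pm\frac{1}{12}$), so no gaps to report.
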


\section{Explicit solutions for the modified Laplacian coflow}

 We study the modified Laplacian coflow \eqref{modifiedcoflow} 
for each of the coclosed $\Gtwo$ forms $\varphi_i$, $i =1,2$, defined respectively by \eqref{ex1} and \eqref{ex3}, on the $7$-dimensional Heisenberg group.
In particular, we prove that the solution of \eqref{modifiedcoflow} for $\varphi_1$ is ancient only
 if the positive constant $A$, that appears in \eqref{modifiedcoflow}, take values in a certain open interval, while 
 the solution of \eqref{modifiedcoflow} for $-\varphi_1$ is ancient for any $A$.
However, we prove that the solution of \eqref{modifiedcoflow} for  $\varphi_2$ is   never ancient.

\begin{theorem}  \label{Th6.1} 
The solution of the modified Laplacian coflow \eqref{modifiedcoflow} for the coclosed $\Gtwo$ form $\varphi_1$, defined by \eqref{ex1}, is given by
\begin{equation}\label{sol_grig_1}
\varphi(t)=\frac{1}{y(t)}\left(e^{127}+e^{347}+e^{567}\right)+y(t)^3\left(e^{135}-e^{146}-e^{236}-e^{245}\right),
\end{equation}
where the function  $y=y(t)$ satisfies 
\begin{equation}\label{ode1}
\begin{cases}\frac{d}{dt}y(t)=\frac{2A\sqrt{6}\,y(t)^5\,-\,1}{12\, y(t)^9},\\ 
y(0)=1.\end{cases}
\end{equation}
Moreover,
\begin{enumerate}
\item [i)] if $0< A<\frac{1}{2\sqrt{6}}$, then $t \in (- \infty, T)$, with $T= - \frac{1}{10\,A^2} \Big(2 \sqrt{6}\, A + \log \big(1 - 2\, \sqrt{6} \,A\big)\Big) >0$. 
Therefore, in this case, the solution \eqref{sol_grig_1} is ancient\rm{;}
\item [ii)] if $A\geq \frac{1}{2\sqrt{6}}$, then $t \in (- \infty, + \infty)$\rm{,} that is, the solution \eqref{sol_grig_1} is eternal\rm{.}
\end{enumerate}
\end{theorem}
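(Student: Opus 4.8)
The plan is to verify that the three-form \eqref{sol_grig_1} with $y$ solving \eqref{ode1} satisfies \eqref{modifiedcoflow}, and then to read off the maximal interval of existence directly from the scalar ODE \eqref{ode1}. For the verification I would repeat the set-up of Corollary \ref{th5.1}: work in the $t$-dependent coframe $f^i = y(t)\,e^i$ for $1\le i\le 6$ and $f^7 = y(t)^{-3}e^7$, so that $\varphi(t)$ is the standard $\Gtwo$ form in this coframe and $g_t$ is the metric making $\{f^i(t)\}$ orthonormal. The structure equations become $df^i=0$ ($i\le 6$) and $df^7 = \frac{\sqrt6}{6}y^{-5}(f^{12}+f^{34}+f^{56})$, whence $d\varphi(t) = \frac{\sqrt6}{3}y^{-5}(f^{1234}+f^{1256}+f^{3456})$. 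Because the coflow preserves $d\psi(t)=0$, one has $\Delta_t\psi(t) = d\,d^*\psi(t) = d\star_t d\varphi(t)$, and the computation is word for word that of Corollary \ref{th5.1}, giving $\Delta_t\psi(t) = \frac23 y^{-6}(e^{1234}+e^{1256}+e^{3456})$; likewise $\frac{d}{dt}\psi(t) = 4y^3y'\,(e^{1234}+e^{1256}+e^{3456})$, since the remaining summands of $\psi(t)=\star_t\varphi(t)$ are $y$-independent in the fixed coframe $\{e^i\}$.

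The only new ingredient relative to Corollary \ref{th5.1} is the term $2d\big((A-\mathrm{Tr}(\tau(t)))\varphi(t)\big)$. To handle it I would read the torsion scalar $\tau_0(t)$ off \eqref{torsion}: writing $d\varphi(t)=\tau_0(t)\psi(t)+\star_t\tau_3(t)$ with $\star_t\tau_3(t)$ orthogonal to $\psi(t)$ (as $\tau_3\wedge\psi=0$ for $\tau_3\in\Omega^3_{27}$), and using $|\psi(t)|^2=7$, one finds $\tau_0(t)=\frac{1}{7}\langle d\varphi(t),\psi(t)\rangle_t=\frac{\sqrt6}{7}y^{-5}$, hence $\mathrm{Tr}(\tau(t))=\frac74\tau_0(t)=\frac{\sqrt6}{4}y^{-5}$ by \eqref{exp:trace}. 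Since $A-\mathrm{Tr}(\tau(t))$ is spatially constant, $2d\big((A-\mathrm{Tr}(\tau))\varphi\big)=2\big(A-\frac{\sqrt6}{4}y^{-5}\big)d\varphi(t)$. Adding this to $\Delta_t\psi(t)$, every term becomes a multiple of $e^{1234}+e^{1256}+e^{3456}$, and equating coefficients with $\frac{d}{dt}\psi(t)$ collapses the whole system to the single scalar identity $4y^3y'=-\frac13 y^{-6}+\frac{2\sqrt6 A}{3}y^{-1}$, which rearranges to \eqref{ode1}. (Equivalently, one may substitute $\tau_0(t)$ and $d\tau_0(t)$ into expression (G) of Proposition \ref{prop:exp-C-G}.) Uniqueness for the ODE then confirms \eqref{sol_grig_1}.

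The heart of the argument is the analysis of \eqref{ode1}, which I would treat as the autonomous equation $y'=F(y)$, $F(y)=\frac{2\sqrt6 A y^5-1}{12y^9}$. On $y>0$ the sign of $F$ is that of $2\sqrt6 A y^5-1$, so $F$ has a unique zero at the repelling equilibrium $y_*=(2\sqrt6 A)^{-1/5}$, with $F<0$ for $y<y_*$ and $F>0$ for $y>y_*$; the position of $y(0)=1$ relative to $y_*$ is governed exactly by the threshold $A=\frac{1}{2\sqrt6}$, since $y_*>1$ for $A<\frac{1}{2\sqrt6}$, $y_*=1$ for $A=\frac{1}{2\sqrt6}$, and $y_*<1$ for $A>\frac{1}{2\sqrt6}$. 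Going backwards in time $y$ moves monotonically toward $y_*$ and, as $F$ vanishes to first order there, the approach is exponentially slow, so $y$ stays positive and finite for all $t\to-\infty$ in every case — this is why the solution is ancient in both i) and ii). Forwards in time the cases split: if $A\ge\frac{1}{2\sqrt6}$ then $y(0)\ge y_*$, so $y$ is non-decreasing and grows only like $t^{1/5}$ (and is constant $\equiv1$ when $A=\frac{1}{2\sqrt6}$), staying finite for every finite $t$ and yielding existence on $(-\infty,+\infty)$, which is ii); if $0<A<\frac{1}{2\sqrt6}$ then $y(0)=1<y_*$, so $y$ decreases and, since $F(y)\sim-\frac{1}{12}y^{-9}\to-\infty$ as $y\to0^+$, it reaches $0$ in finite time $T>0$, the singular time of i).

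To finish i) I would compute $T$ by separation of variables, $T=\int_0^1\frac{12y^9}{1-2\sqrt6 A y^5}\,dy$; the substitution $u=y^5$ turns this into $\frac{12}{5}\int_0^1\frac{u}{1-bu}\,du$ with $b=2\sqrt6 A\in(0,1)$, which equals $-\frac{12}{5b^2}\big(b+\log(1-b)\big)$, and inserting $b^2=24A^2$ gives precisely $T=-\frac{1}{10A^2}\big(2\sqrt6 A+\log(1-2\sqrt6 A)\big)$; positivity is immediate from $b+\log(1-b)<0$ on $(0,1)$. The main obstacle is the phase-line analysis of the third paragraph: one must argue rigorously that the approach to $y_*$ consumes infinite backward time, so that the solution is genuinely ancient rather than merely extending to a finite negative time, and separately that forward extinction at $y=0$ occurs in finite time exactly when $A<\frac{1}{2\sqrt6}$. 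By comparison the ansatz verification is a routine reprise of Corollary \ref{th5.1} augmented by the trace term.
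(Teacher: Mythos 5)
Your proposal is correct and takes essentially the same route as the paper: you verify the ansatz \eqref{sol_grig_1} in the moving coframe $f^i=y(t)e^i$, $f^7=y(t)^{-3}e^7$, reduce the flow to the scalar ODE \eqref{ode1}, and then analyse that autonomous equation via its equilibrium $\left(2\sqrt{6}A\right)^{-1/5}$ and separation of variables, arriving at the same logarithmic expression for $T$. The only cosmetic differences are that you extract $\tau_0(t)=\frac{1}{7}\langle d\varphi(t),\psi(t)\rangle_t$ by orthogonality rather than via the paper's explicit decomposition of $d\varphi(t)$ into $\tau_0\psi+\star_t\tau_3$, and that you obtain infinite backward existence from a phase--line argument and $T$ from the definite integral $\int_0^1 \frac{12y^9}{1-2\sqrt{6}Ay^5}\,dy$, whereas the paper evaluates the same antiderivative in its implicit solution formula \eqref{energy1}.
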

\begin{proof} 
By the  Picard-Lindel\"of Theorem, there exists a maximal open interval $I$, containing $0$, and a smooth  function 
$y: I \rightarrow (0, + \infty)$, which is  
the unique  solution of  \eqref{ode1}.

To prove that \eqref{sol_grig_1} is the solution to the coflow \eqref{modifiedcoflow} for $\varphi_1$, we proceed as follows.
As in the proof of Theorem \ref{th5.1} , for each $t\in I$, we consider the basis $\{f^1(t),\dots,f^7(t)\}$ of left invariant 1-forms
on $H$ defined by
\begin{equation*}
\begin{aligned}
f^i&=f^i(t)=y(t)\,e^i,\quad i=1,\dots,6,\\
f^7&=f^7(t)=y(t)^{-3}\,e^7,\\
\end{aligned}
\end{equation*}
where the function $y=y(t)$ now satisfies  \eqref{ode1}. 
Then,  $f^i(0)=e^i$, for $i \in \{1, \cdots, 7\}$, and  the structure equations of $H$, with respect to  the basis $\{f^1(t),\dots,f^7(t)\}$,  are 
\begin{equation} \label{strcon_grig_1}
df^i=0, \quad 1\leq i\leq 6,   \qquad df^7=\frac{\sqrt{6}}{6}\,y^{-5}(t)(f^{12}+f^{34}+f^{56}).
\end{equation}
Moreover, for any $t \in I$, the 3-form $\varphi(t)$ defined by \eqref{sol_grig_1} has the following expression
\begin{equation}\label{sol:th6}
\begin{aligned}
\varphi(t)&=f^{127}+f^{347}+f^{567}+f^{135}-f^{146} -f^{236}-f^{245}.
\end{aligned}
\end{equation}
So, $\varphi(0)=\varphi_{1}$ and, for any $t \in I$,  the 3-form $\varphi(t)$ on $H$ induces the metric 
$g_t$ such that  the coframe $\{f^1(t),\dots,f^7(t)\}$ of $\frak {h}^*$ is orthonormal. Denote by $\star_{t}$ the Hodge  star operator 
determined by $g_t$. Using \eqref{eqn:3-forma G2}, \eqref{eqn:4-forma G2} and \eqref{strcon_grig_1}, we have $d\,\star_{t}\varphi(t)=0$, where
$\star_{t}\varphi(t)$ is given by
$$
\star_{t}\varphi(t)=f^{1234}+f^{1256}+f^{1367}+f^{1457}+f^{2357}-f^{2467}+f^{3456}.
$$
Thus, in terms of the coframe $\{e^1,\dots, e^7\}$ of $\frak {h}^*$, the 4-form $\star_{t}\varphi(t)$ has the following expression
\begin{equation*}
\begin{aligned}
\star_{t}\varphi(t)&=  y(t)^4(e^{1234} + e^{1256}+e^{3456}) + e^{1367} + e^{1457} + e^{2357} - e^{2467}.
\end{aligned}
\end{equation*}
This implies
\begin{eqnarray*}
\frac{d}{dt}\star_t\varphi (t)=4\,y(t)^3\, y'(t)(e^{1234} + e^{1256}+e^{3456}),
\end{eqnarray*}
that is
\begin{eqnarray}\label{thm-6-1}
\frac{d}{dt}\star_t\varphi (t)=\frac{2A\sqrt{6}\,y(t)^5-1}{3\,y(t)^6}(e^{1234} + e^{1256}+e^{3456}),
\end{eqnarray}
since the function $y=y(t)$ satisfies \eqref{ode1}.

On the other hand, by \eqref{torsion} we know that the torsion forms $\tau_i(t)$ $(i=0,1,2,3)$ of $\varphi(t)$
are such that $\tau_1(t)=0=\tau_2(t)$ since $d(\star_t\varphi (t))=0$. Then, from \eqref{strcon_grig_1}, \eqref{sol:th6} and \eqref{torsion}, we have 
\begin{equation}\label{exp:dif-1}
d\varphi (t)=\frac{\sqrt{6}}{3\,y(t)^5}\,(f^{1234}+f^{1256}+f^{3456})=\tau_0(t)\star_t\varphi(t)+\star_t\tau_3(t),
\end{equation}
where 
\begin{gather*}
\begin{array}{lcl}
\tau_3(t)&=&\frac{\sqrt{6}}{7\,y(t)^5}(-f^{135}+f^{146}+f^{236}+f^{245})+\frac{4\sqrt{6}}{21y(t)^5}(f^{127}+f^{347}+ f^{567}),\\
\end{array}
\end{gather*}
\begin{gather*}
\begin{array}{lcl}
\star_t\tau_3(t)&=&\frac{\sqrt{6}}{7y(t)^5}(-f^{1367}-f^{1457}-f^{2357}+f^{2467})+\frac{4\sqrt{6}}{21y(t)^5}(f^{1234}+f^{1256}+f^{3456}),
\end{array}
\end{gather*}
and
$$
\tau_0(t)=\frac{\sqrt{6}}{7y(t)^5}.
$$
 So, according with the first equality of \eqref{exp:dif-1},
\begin{eqnarray*}
\begin{array}{lcl}
 \Delta_t\star_t\varphi (t)+2d\Big((A-\frac{7}{4}\tau_0)\varphi(t)\Big)&=&
  d\star_t d(\varphi(t)) +2(A-\frac{7}{4}\tau_0) d\varphi(t) \\&=&
 \frac{2A\sqrt{6}\,y(t)^5-1}{3\,y(t)^{10}}\left(f^{1234}+f^{1256}+f^{3456}\right),
\end{array}
\end{eqnarray*}
that is
\begin{eqnarray*}
\begin{array}{lcl}
 \Delta_t\star_t\varphi (t)+2d\Big((A-\frac{7}{4}\tau_0)\varphi(t)\Big)&=& \frac{2A\sqrt{6}\,y(t)^5-1} {3\,y(t)^6}\left(e^{1234}+e^{1256}+e^{3456}\right).
\end{array}
\end{eqnarray*}
The last equality, together with \eqref{exp:trace} and \eqref{thm-6-1}, show that \eqref{sol_grig_1} solves  the modified Laplacian 
coflow \eqref{modifiedcoflow} for $\varphi_1$.\par

In order to show that the solution $\varphi(t)$, given by \eqref{sol_grig_1}, is ancient, 
we analyse the behaviour of the function $y=y(t)$ according with the values of the positive constant $A$.
If $A = \frac{1}{2 \sqrt{6}}$, then ${y}(t)\equiv 1$ solves \eqref{ode1} for all $t \in (- \infty, + \infty)$. 
Assume $A \neq  \frac{1}{2 \sqrt{6}}$ and observe that the constant function $\widehat{y}(t)\equiv \left(2\sqrt{6}A\right)^{-1/5}$ satisfies the differential equation that appears in \eqref{ode1}, which 
is autonomous. Consequently any solution ${y}(t)$ having $y'(t_0)=0$ at some time $t_0$ satisfies $y(t_0)=\widehat{y}(t_0)$, giving $y\equiv \widehat{y}$. Hence, the solution $y=y(t)$ of the system \eqref{ode1} is monotone and it must satisfy either $y(t)>\widehat{y}(t)$ 
or $y(t)<\widehat{y}(t)$ for any $t\in I$, according to the value of $A$. In other words,  if $2 \sqrt{6}\, A<1$ then $y(0)<\widehat{y}(0)$, so $y(t)<\widehat{y}(t)$, and similarly  
$y(t)>\widehat{y}(t)$ if $2 \sqrt{6} \,A>1$.

Now, we rewrite the differential equation  that appears in \eqref{ode1} as
$$
\left (\frac{\sqrt{6}}{A}  y(t)^4 + \frac{\sqrt{6}}{A} \frac{y(t)^4}{2 \sqrt{6} A y(t)^5 - 1} \right ) y'(t)= 1.
$$ 
Integrating this equation from $0$ to $t$, we have
\begin{equation}\label{energy1}
t = \frac{\sqrt{6}}{5A} (y(t)^5 - 1) + \frac{1}{10 A^2} \log  \left \vert \frac{1 - 2 \sqrt{6} A y(t)^5}{1 - 2 \sqrt{6} A}  \right \vert.
\end{equation}
This equation allows us to understand the behaviour of the solution at its singular times. Indeed the limits of $y(t)$ must be singular values of \eqref{energy1}; otherwise, through  a trivial compactness argument, we could restart the flow, violating the maximality of solutions.  So, if ${2 \sqrt{6}}\, A < 1$ then $y=y(t)$ decreases from $\left(2\sqrt{6}\,A\right)^{-1/5}$ to $0$ as $t$ goes from 
$-\infty$ to $-\frac{2A\,\sqrt{6}+\mathrm{log}\left(1-2\sqrt{6}\,A\right)}{10A^2}$. Otherwise, if $2 \sqrt{6}\, A>1$, then $y=y(t)$, which now is an increasing function, goes from $\left(2\sqrt{6}\,A\right)^{-1/5}$ to $+\infty$ 
as $t$ goes from $-\infty$ to $+\infty$. In particular, we have that the definition interval $I$ of the function $y = y (t)$  is
$$
\begin{array}{l}
I = (- \infty, - \frac{2 \sqrt{6}\, A + \log(1 - 2 \sqrt{6}\, A)}{10 A^2}), \quad \rm{if} \,\,  A < \frac{1}{2 \sqrt{6}}, \\[3pt]
\end{array}
$$
and
$$
\begin{array}{l}
I = (- \infty, + \infty), \quad \rm{if} \,\,  A \geq \frac{1}{2 \sqrt{6}}.
\end{array}
$$
\end{proof}

\begin{remark} 
In a similar way as in the proof  of Theorem \ref{th5.1}, one can check that the Riemannian curvature $R(g_t)$
of the metric $g_t$ induced by \eqref{sol_grig_1} is such that
$$
\| R(g_t) \|_{g_t} ^2 = \frac{23}{48} y(t)^{-20},
 $$
and  so, in the  case iii) (corresponding to  $A>\frac{1}{2\sqrt{6}}$) 
$\lim_{ t \to + \infty}  R(g_t) = 0.$
\end{remark} 

\medskip 

In the following theorem we study the  modified Laplacian coflow  \eqref{modifiedcoflow} when the initial coclosed $\Gtwo$ form 
on the 7-dimensional Heisenberg group is equal to
$-\varphi_1$, where $\varphi_1$ is defined by \eqref{ex1}.

\begin{theorem}  \label{Th6.1-2} 
The solution of the modified Laplacian coflow \eqref{modifiedcoflow} with initial coclosed $\Gtwo$ form $-\varphi_1$ is ancient and it is given by
\begin{equation}\label{sol_grig_2}
\varphi(t)=-\frac{1}{y(t)}\left(e^{127}+e^{347}+e^{567}\right)-y(t)^3\left(e^{135}-e^{146}-e^{236}-e^{245}\right),
\end{equation}
where $t \in (- \infty, T)$, with $T=\frac{\sqrt{6}}{5A}\Big(1- (2A\sqrt{6})^{-1}\,{\mathrm {log}}\big(2A\sqrt{6}+1\big)\Big)$, and
the  function  $y=y(t)$  satisfies 
\begin{equation}\label{ode2}
\begin{cases}\frac{d}{dt}y(t)=-\frac{2A\sqrt{6}y(t)^5+1}{12 y(t)^9},\\ y(0)=1.\end{cases}
\end{equation}
The underlying metrics $g_t$ of this solution converge smoothly, up to pull-back by time-dependent diffeomorphisms, to a flat metric, 
uniformly on compact sets in $H$ as t goes to $- \infty$.
\end{theorem}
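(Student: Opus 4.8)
The plan is to mimic the structure of the proof of Theorem~\ref{Th6.1} almost verbatim, replacing the initial form $\varphi_1$ by $-\varphi_1$ and tracking the sign changes predicted by Lemma~\ref{lem:relations-torsion}. First I would introduce the same $t$-dependent coframe $f^i(t)=y(t)\,e^i$ for $1\leq i\leq 6$ and $f^7(t)=y(t)^{-3}e^7$, so that the structure equations \eqref{strcon_grig_1} are unchanged, and verify that the ansatz \eqref{sol_grig_2} becomes $\varphi(t)=-\big(f^{127}+f^{347}+f^{567}+f^{135}-f^{146}-f^{236}-f^{245}\big)=-\varphi_1(f)$ in this frame. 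Since $\star_{-\varphi}=-\star_{\varphi}$, the induced metric $g_t$ is exactly the same as in Theorem~\ref{Th6.1} (it depends only on the $\Gtwo$-structure up to the overall sign), so again $\{f^i(t)\}$ is $g_t$-orthonormal, $d\star_t\varphi(t)=0$, and the torsion forms $\tau_1(t),\tau_2(t)$ vanish.

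The key computational step is to apply Lemma~\ref{lem:relations-torsion}: the torsion forms of $-\varphi_1$ are $\widetilde{\tau_0}(t)=-\tau_0(t)=-\tfrac{\sqrt{6}}{7\,y(t)^5}$ and $\widetilde{\tau_3}(t)=\tau_3(t)$. I would then substitute these into the expression (G) of Proposition~\ref{prop:exp-C-G} for the modified coflow. The sign flip in $\tau_0$ is precisely what turns the numerator $2A\sqrt{6}\,y^5-1$ appearing in \eqref{ode1} into $2A\sqrt{6}\,y^5+1$ in \eqref{ode2}: computing $\frac{d}{dt}\star_t\varphi(t)=4y^3y'(e^{1234}+e^{1256}+e^{3456})$ and matching it against $\Delta_t\star_t\varphi(t)+2d\big((A-\tfrac{7}{4}\widetilde{\tau_0})\varphi(t)\big)$, one reads off the ODE \eqref{ode2}. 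This is essentially a bookkeeping exercise, and I expect the main obstacle to be keeping the signs consistent throughout — in particular verifying that $2d\big((A-\tfrac{7}{4}\widetilde{\tau_0})(-\varphi_1)\big)$ contributes with the correct sign once both the sign of $\widetilde{\tau_0}$ and the overall sign of the form are accounted for.

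Next I would analyze the scalar ODE \eqref{ode2}. Unlike \eqref{ode1}, here the numerator $2A\sqrt{6}\,y^5+1$ is strictly positive for all $y>0$ and all $A>0$, so $y'(t)<0$ everywhere and $y$ is strictly decreasing regardless of the value of $A$; this is exactly why the solution is ancient for every $A$, in contrast to the dichotomy of Theorem~\ref{Th6.1}. To obtain the explicit interval of existence, I would separate variables and integrate, writing the differential equation as
$$
\left(\frac{\sqrt{6}}{A}\,y^4-\frac{\sqrt{6}}{A}\,\frac{y^4}{2\sqrt{6}Ay^5+1}\right)y'(t)=-1,
$$
so that integrating from $0$ to $t$ yields an energy-type relation analogous to \eqref{energy1}. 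Examining the limits $y\to+\infty$ (as $t\to-\infty$) and $y\to 0$ (as $t\to T$) then gives the upper endpoint $T=\tfrac{\sqrt{6}}{5A}\big(1-(2A\sqrt{6})^{-1}\log(2A\sqrt{6}+1)\big)$, confirming $I=(-\infty,T)$.

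Finally, for the convergence statement, I would follow the bracket-flow argument sketched at the end of Corollary~\ref{th5.1}: fixing the $\Gtwo$-structure on $\R^7$ and letting the Lie bracket $\mu(t)$ evolve, the structure equations \eqref{strcon_grig_1} show that the only nonzero bracket coefficient is proportional to $y(t)^{-5}$. As $t\to-\infty$ we have $y(t)\to+\infty$, hence $y(t)^{-5}\to 0$, so $\mu(t)$ converges in $\mathcal{N}\subset(\Lambda^2\R^7)^*\otimes\R^7$ to the null (abelian) bracket. By \cite[Proposition~2.8]{La09} this convergence of brackets is equivalent to smooth convergence of the metrics $g_t$, up to pull-back by time-dependent diffeomorphisms, to a flat metric uniformly on compact sets, which is the desired conclusion.
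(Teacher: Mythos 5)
Your proposal is correct and takes essentially the same route as the paper's proof: the same scaled coframe ansatz $f^i(t)=y(t)e^i$, $f^7(t)=y(t)^{-3}e^7$, the same matching of $\frac{d}{dt}\star_t\varphi(t)$ against $\Delta_t\star_t\varphi(t)+2d\big((A-\tfrac{7}{4}\tau_0)\varphi(t)\big)$ leading to \eqref{ode2}, the same separation-of-variables integration giving the energy relation and the interval $(-\infty,T)$, and the same bracket-flow argument for convergence to a flat metric. The only (harmless) difference is that you import $\widetilde{\tau_0}=-\tfrac{\sqrt{6}}{7y(t)^5}$ and $\widetilde{\tau_3}=\tau_3$ from Theorem \ref{Th6.1} via Lemma \ref{lem:relations-torsion}, whereas the paper recomputes these torsion forms directly from $d\varphi(t)$; the outcome is identical.
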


\begin{proof}
By the  Picard-Lindel\"of Theorem, there exists  a maximal open interval $I$, containing $0$, and a smooth function 
$y: I \rightarrow (0, + \infty)$, which is the unique solution of  \eqref{ode2}.\par
To prove that  \eqref{sol_grig_2} is the solution of the coflow \eqref{modifiedcoflow} for $-\varphi_1$, we proceed as follows.
As in the proof of Theorem \ref{Th6.1}, for each $t\in I$, we consider the basis $\{f^1(t),\dots,f^7(t)\}$ of left invariant 1-forms
on $H$ defined by
\begin{equation*}
\begin{aligned}
f^i&=f^i(t)=y(t)\,e^i,\quad i=1,\dots,6\\
f^7&=f^7(t)=y(t)^{-3}\,e^7,\\
\end{aligned}
\end{equation*}
where the function $y=y(t)$ now satisfies  \eqref{ode2}. 
Then, $f^i(0)=e^i$, for $i\in\{1, \cdots, 7\}$, and the structure equations of $H$, with respect to  the basis $\{f^1(t),\dots,f^7(t)\}$, are 
\begin{equation} \label{strcon_grig_2}
df^i=0, \quad 1\leq i\leq 6,   \qquad df^7=\frac{\sqrt{6}}{6}\,y(t)^{-5}(f^{12}+f^{34}+f^{56}).
\end{equation}
Now, for any $t\in I$, the 3-form $\varphi(t)$ defined by \eqref{sol_grig_2} has the following expression
\begin{equation}\label{exp:thm6.1-2}
\begin{aligned}
\varphi(t)&=-(f^{127}+f^{347}+f^{567}+f^{135}-f^{146} -f^{236}-f^{245}).
\end{aligned}
\end{equation}
So, $\varphi(0)=-\varphi_{1}$ and, for any $t\in I$,  the metric $g_t$ induced by $\varphi(t)$ 
is such that the  coframe $\{f^1(t),\dots,f^7(t)\}$ of $\frak {h}^*$ is orthonormal. Denote by $\star_{t}$ the Hodge star operator 
determined by $g_t$. Using  \eqref{strcon_grig_2}, we have $d\star_{t}\varphi(t)=0$, where
$\star_{t}\,\varphi(t)$ is given by
$$
\star_{t}\,\varphi(t)=f^{1234}+f^{1256}+f^{1367}+f^{1457}+f^{2357}-f^{2467}+f^{3456}.
$$
Then, in terms of the coframe $\{e^1,\dots, e^7\}$ of $\frak {h}^*,$ the 4-form $\star_{t}\,\varphi(t)$ has the following expression
\begin{equation*}
\begin{aligned}
\star_{t}\,\varphi(t)&= y(t)^4(e^{1234} + e^{1256}+e^{3456}) + e^{1367} + e^{1457}+e^{2357}-e^{2467}.
\end{aligned}
\end{equation*}
Therefore,
\begin{eqnarray*}\label{ddt2}
\begin{array}{lcl}
\frac{d}{dt}\star_t\varphi(t)=4y(t)^3\,y'(t)(e^{1234} + e^{1256}+e^{3456}),\\
\end{array}
\end{eqnarray*}
that is, 
\begin{eqnarray}\label{ddt2}
\begin{array}{lcl}
\frac{d}{dt}\star_t\varphi(t)=-\frac{2A\sqrt{6}\,y(t)^5+1}{3\,y(t)^6}(e^{1234} + e^{1256}+e^{3456}),
\end{array}
\end{eqnarray}
since the function $y=y(t)$ satisfies \eqref{ode2}.

On the other hand, by \eqref{torsion} we know that the torsion forms $\tau_i(t)$ $(i=0,1,2,3)$ of $\varphi(t)$
are such that $\tau_1(t)=0=\tau_2(t)$ since $d(\star_t\varphi (t))=0$. Then, from \eqref{strcon_grig_2}, \eqref{exp:thm6.1-2} and using again \eqref{torsion}, we have 
\begin{equation}\label{exp:dif-2}
d\varphi (t)=-\frac{\sqrt{6}}{3\,y(t)^5}\,(f^{1234}+f^{1256}+f^{3456})=\tau_0(t)\star_t\varphi(t)+\star_t\tau_3(t),
\end{equation}
where
\begin{gather*}
\begin{array}{lcl}
\tau_3(t)&=&\frac{\sqrt{6}}{7\,y(t)^5}(-f^{135}+f^{146}+f^{236}+f^{245})+\frac{4\sqrt{6}}{21y(t)^5}(f^{127}+f^{347}+ f^{567}),\\
\end{array}
\end{gather*}
\begin{gather*}
\begin{array}{lcl}
\star_t\tau_3(t)&=&\frac{\sqrt{6}}{7y(t)^5}(-f^{1367}-f^{1457}-f^{2357}+f^{2467})+\frac{4\sqrt{6}}{21y(t)^5}(f^{1234}+f^{1256}+f^{3456}),
\end{array}
\end{gather*}
and
$$
\tau_0(t)=-\frac{\sqrt{6}}{7y(t)^5}.
$$
Then, according with the first equality of \eqref{exp:dif-2},
\begin{eqnarray*}
\begin{array}{lcl}
\Delta_t\star_t\varphi (t)+2d\Big((A-\frac{7}{4}\tau_0)\varphi(t)\Big)&=&d\star_t d(\varphi(t)) +2(A-\frac{7}{4}\tau_0) d\varphi(t) \\&=&
-\frac{2A\sqrt{6}\,y(t)^5+1}{3y(t)^{10}}\left(f^{1234}+f^{1256}+f^{3456}\right),
\end{array}
\end{eqnarray*}
or, equivalently,
\begin{eqnarray*}
\begin{array}{lcl}
 \Delta_t\star_t\varphi (t)+2d\Big((A-\frac{7}{4}\tau_0)\varphi(t)\Big)&=&-\frac{2A\sqrt{6}\,y(t)^5+1}{3y(t)^{6}}\left(e^{e^{1234}+1256}+e^{3456}\right).
\end{array}
\end{eqnarray*}
The last equality, together with  \eqref{exp:trace} and \eqref{ddt2}, show that \eqref{sol_grig_2} solves the modified Laplacian flow \eqref{modifiedcoflow} 
for $-\varphi_1$.\par

To show that the solution $\varphi(t)$, given by \eqref{sol_grig_2}, is ancient, we study 
 the behaviour of the function $y=y(t)$. To this end, we rewrite the differential equation that appears in  \eqref{ode2}  as
$$-\frac{12\,y(t)^9}{2A\sqrt{6}\,y(t)^5+1}\,y'=1.$$
Integrating this equation  from $0$ to $t$ we obtain
\begin{equation}\label{energy2}
\frac{\sqrt{6}}{5A}\left(1-y^5(t)\right)+\frac{1}{10A^2}\mathrm{log}\left(\frac{2A\sqrt{6}\,y^5(t)+1}{2A\sqrt{6}+1}\right)=t.
\end{equation}
Clearly $y'(t) < 0$ since the function $y=y(t)$ satisfies  the differential equation that appears in \eqref{ode2}. Then, \eqref{energy2} implies  
that  the function $y=y(t)$ decreases from $+\infty$  to $0$ as $t$ goes from $-\infty$ to $\frac{\sqrt{6}}{5A}\Big(1- \frac{1}{2A\sqrt{6}}\,{\mathrm {log}}\big(2A\sqrt{6}+1\big)\Big)$.\par
To study the behaviour of the underlying metric  $g_t$  of
the solution \eqref{sol_grig_2}  for $ t   \to - \infty$,  we proceed in a similar way as in  the proof of Theorem \ref{th5.1}.
\end{proof}

\medskip
Concerning the modified Laplacian coflow \eqref{modifiedcoflow} for the coclosed $\Gtwo$ form $\varphi_2$
on the 7-dimensional Heisenberg group $H$ we have the following.

\begin{theorem}  \label{Th6.2} 
The solution of  the modified Laplacian coflow \eqref{modifiedcoflow} with initial coclosed
$\Gtwo$-structure $\varphi_2$ is defined on a bounded interval, and it is given by
\begin{equation}\label{sol_grig_3}
\varphi(t)=\frac{y(t)}{z(t)^2}e^{127}-{y(t)^{-1}}\left(e^{347}+e^{567}\right)+y(t)z(t)^2\left(e^{135}-e^{146}+e^{236}+e^{245}\right),
\end{equation}
where the functions  $y=y(t)$ and $z=z(t)$ satisfy
\begin{equation}\label{ode3}
\begin{cases}
\frac{d}{dt}y(t)=\frac{2A\sqrt{6}\,y(t)z(t)^6+2z(t)^2+y(t)^2}{12\,y(t)^3z(t)^8},&
\frac{d}{dt}z(t)=-\frac{2A\sqrt{6}\,y(t)z(t)^4+1}{12\,y(t)^2z(t)^7},\\
y(0)=1,&z(0)=1.
\end{cases}
\end{equation}
\end{theorem}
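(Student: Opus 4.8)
The plan is to follow the same scheme as in the proofs of Theorems \ref{Th6.1} and \ref{Th6.1-2}: exhibit an explicit two-parameter family of coclosed $\Gtwo$ forms, verify it solves \eqref{modifiedcoflow}, and then analyse the resulting system of ordinary differential equations. First I would invoke Cauchy's Theorem to produce a maximal open interval $I\ni 0$ and a unique smooth solution $(y,z)\colon I\to(0,+\infty)^2$ of \eqref{ode3}. The anisotropy of $\varphi_2$ (its coefficients are no longer all equal, cf. \eqref{ex3}) is what forces the two scaling functions $y$ and $z$, so I would introduce the time-dependent coframe
$$f^i=y(t)\,e^i\ (i=1,2),\qquad f^i=z(t)\,e^i\ (3\le i\le 6),\qquad f^7=\frac{1}{y(t)\,z(t)^2}\,e^7,$$
chosen precisely so that $\varphi(t)$ from \eqref{sol_grig_3} becomes the fixed form $f^{127}-f^{347}-f^{567}+f^{135}-f^{146}+f^{236}+f^{245}$. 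Consequently $g_t$ is the metric making $\{f^i(t)\}$ orthonormal, and $\star_t$ is computed directly in this coframe.

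Next I would record the structure equations. Since $de^i=0$ for $i\le 6$ and $de^7=\frac{\sqrt6}{6}(e^{12}+e^{34}+e^{56})$ by \eqref{eq:lieheisenberg}, one gets $df^i=0$ for $i\le 6$ and
$$df^7=\frac{\sqrt6}{6}\Big(\frac{1}{y^3 z^2}\,f^{12}+\frac{1}{y z^4}\,(f^{34}+f^{56})\Big).$$
It is exactly the appearance of two distinct coefficients here --- in contrast with \eqref{strcon_grig_1} and \eqref{strcon_grig_2} --- that produces a genuinely coupled system rather than a single equation. From these equations I would compute $\star_t\varphi(t)$, then $d\varphi(t)$, and read off the torsion forms $\tau_0(t)$ and $\tau_3(t)$ via \eqref{torsion} (using $\tau_1=\tau_2=0$ since $d\psi(t)=0$). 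Substituting into the expression (G) of Proposition \ref{prop:exp-C-G} and matching the coefficients of $e^{1234}$, $e^{1256}$, $e^{3456}$ (and the remaining monomials) against $\frac{d}{dt}\star_t\varphi(t)$ should reproduce precisely \eqref{ode3}. This portion is mechanical, though bookkeeping-heavy because $\tau_3(t)$ now carries components of two different sizes.

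The crux of the theorem is the claim that $I$ is bounded. Observe first that for every $A>0$ both numerators in \eqref{ode3} are sums of positive terms, so $y'(t)>0$ and $z'(t)<0$ throughout $I$; thus $y$ is strictly increasing and $z$ strictly decreasing, with $y\ge 1$, $z\le 1$ for $t\ge 0$. I would then treat each endpoint separately. Going forward, the maximal solution must leave $(0,+\infty)^2$; since $y$ cannot blow up while $z$ stays bounded below (for then $y\,y'\lesssim z^{-8}$ grows only at a finite rate), the forward endpoint corresponds to $z\to 0$, and a comparison estimate on
$$\frac{d}{dt}z^8=-\frac{2}{3\,y^2}\big(2\sqrt6\,A\,y z^4+1\big)$$
should force this to occur at a finite time $t_+$. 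Going backward, $y\downarrow$ and $z\uparrow$, and the symmetric analysis should force $y\to 0$ at a finite backward time $t_-$.

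The hard part will be closing this last argument rigorously, because near each endpoint the two functions degenerate simultaneously (forward: $z\to 0$ while possibly $y\to+\infty$; backward: $y\to 0$ while $z\to+\infty$), so no single-variable comparison is self-contained. Unlike the autonomous scalar equations of Theorems \ref{Th6.1} and \ref{Th6.1-2}, which integrate explicitly to \eqref{energy1} and \eqref{energy2}, the planar system \eqref{ode3} does not obviously separate or admit such a first integral; I therefore expect to need coupled a priori bounds --- controlling the growth of $y^2$ by $\int z^{-8}\,dt$ and the decay of $z^8$ by $\int y^{-2}\,dt$ --- and then to close the loop between these two estimates to conclude that both $t_+$ and $t_-$ are finite, whence $I$ is a bounded interval.
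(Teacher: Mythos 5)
Your setup --- Cauchy's theorem for \eqref{ode3}, the coframe $f^1=y\,e^1$, $f^2=y\,e^2$, $f^i=z\,e^i$ ($3\le i\le 6$), $f^7=y^{-1}z^{-2}e^7$, the structure equations, and the coefficient-matching that verifies \eqref{sol_grig_3} solves \eqref{modifiedcoflow} --- coincides with the paper's proof, as does the observation that $z'<0<y'$ on $I$. The genuine gap is in the boundedness of $I$, which is the actual content of the theorem, and your outline does not close it. Forward in time, the comparison $\frac{d}{dt}z^8=-\frac{2}{3y^2}\bigl(2\sqrt{6}\,A\,y z^4+1\bigr)$ cannot by itself force extinction of $z$ in finite time: the right-hand side carries factors $1/y$ and $1/y^2$, and since $y$ is increasing (possibly unboundedly, precisely because $y'\sim \frac{1}{12\,yz^8}$ blows up as $z\to 0$), the decay rate of $z^8$ may degenerate, so no finite $t_+$ follows without a quantitative bound on $y$ --- which you never produce. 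Your assertion that ``the maximal solution must leave $(0,+\infty)^2$'' also presupposes $t_+<\infty$, which is what has to be proved. The ``coupled a priori bounds'' you invoke at the end are left unspecified and you flag them yourself as the missing step; as written, the crucial half of the theorem is a plan, not a proof. (Your backward claim of a ``symmetric analysis'' is also off: the system is not symmetric in $y$ and $z$, and the backward endpoint requires a case distinction, see below.)

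What the paper actually does, and what your proposal lacks, is a second-derivative (concavity) mechanism that neutralizes the coupling. From \eqref{ode3} one computes
\begin{equation*}
z''=-\frac{1}{144\,y^6z^{15}}\Bigl(24A^2\bigl(3y^{4}z^{8}-y^{2}z^{10}\bigr)+2A\sqrt{6}\,\bigl(9y^3z^4-4yz^6\bigr)+5y^2-4z^2\Bigr),
\end{equation*}
and each grouped term is positive on $(0,t_+)$ because there $y\ge 1\ge z$ (factor them as $y^2z^8(3y^2-z^2)$, $yz^4(9y^2-4z^2)$, $5y^2-4z^2$). Hence $z''<0$, so $z'(t)<z'(0)<0$ on $(0,t_+)$: the slope of $z$ is frozen below its (negative) initial value regardless of how large $y$ becomes, and positivity of $z$ then forces $t_+\le\frac{z_+-1}{z'(0)}<+\infty$. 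Backward in time the paper computes $y''$, notes that at a backward maximal endpoint either $z_-<+\infty$ (which forces $y_-=0$) or $z_-=+\infty$, and shows in both cases that the bracket in the identity for $-144\,y^7z^{16}\,y''$ is positive near $t_-$ (in the first case it tends to $12z_-^4>0$; in the second the term $y\bigl(10z^8-11z^6y^2-8z^4y^4\bigr)$ dominates and tends to $+\infty$), so $y''<0$ there. Then $y'(t)>y'(\overline{t})$ on $(t_-,\overline{t})$ together with $y>0$ gives $t_-\ge\frac{y_--y(\overline{t})}{y'(\overline{t})}+\overline{t}>-\infty$. Without this concavity argument, or a genuine substitute for it, the boundedness claim remains unproved in your proposal.
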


\begin{proof}  
By the Picard–Lindel\"of Theorem, there exists a maximal open interval $I$, containing 0, and two smooth functions $y, z: I \rightarrow (0, + \infty)$, which are the unique solution of \eqref{ode3}.\par
We first prove that \eqref{sol_grig_3} is the solution of the  coflow  \eqref{modifiedcoflow} for $\varphi_2$.  As in the proof of Theorem \ref{th5.2}, for each $t\in I$, we consider the basis $\{f^1(t),\dots,f^7(t)\}$ of left invariant 1-forms
on $H$ defined by
\begin{equation*}
\begin{aligned}
f^i&=f^i(t)=y(t)\,e^i,\qquad\, i=1,2,\\
f^i&=f^i(t)=z(t)\,e^i,\qquad\, i=3,\dots,6,\\
f^7&=f^7(t)=y(t)^{-1}z(t)^{-2}\,e^7,
\end{aligned}
\end{equation*}
where the functions $y=y(t)$ and $z=z(t)$ satisfy now \eqref{ode3}. Then, $f^i(0)=e^i$, for $i\in\{1,\cdots,7\},$  and the structure equations of $H$, with respect to  the basis $\{f^1(t),\dots,f^7(t)\}$, are 
\begin{equation} \label{strcon_grig_3}
\begin{aligned}
df^i&=0, \quad 1\leq i\leq 6, \\ 
df^7&=\frac{\sqrt{6}}{6}\,y(t)^{-1}z(t)^{-2}\Big(y(t)^{-2}f^{12}+z(t)^{-2}f^{34}+z(t)^{-2}f^{56}\Big).
\end{aligned}
\end{equation}
Moreover, for any $t\in I$, the 3-form  $\varphi(t)$ defined by \eqref{sol_grig_3} has the following expression
\begin{equation}\label{sol-H-2-2}
\begin{aligned}
\varphi(t)&=f^{127}-f^{347}-f^{567}+f^{135}-f^{146} +f^{236}+f^{245}.
\end{aligned}
\end{equation}
So $\varphi(0)=\varphi_{2}$ and, for any $t\in I$, the 3-form $\varphi(t)$ on $H$ induces the metric 
$g_t$ such that $\{f^1(t),\dots,f^7(t)\}$ of $\frak {h}^*$ is an orthonormal basis of $\frak {h}^*$. Denote by $\star_{t}$ the Hodge operator 
determined by $g_t$. Using \eqref{eqn:3-forma G2}, \eqref{eqn:4-forma G2} and \eqref{strcon_grig_3}, we have $d\,\star_{t}\varphi(t)=0$, where
 $\star_{t}\varphi(t)$ is given by 
$$
\star_{t}\varphi(t)= -f^{1234}-f^{1256}-f^{1367}-f^{1457}+f^{2357}-f^{2467}+f^{3456}.
$$
Thus, in terms of the coframe $\{e^1,\dots, e^7\}$ of $\frak {h}^*$, the 4-form $\star_{t}\varphi(t)$ has the following expression
\begin{equation*}
\begin{aligned}
\star_{t}\varphi(t)&=  y(t)^2z(t)^2(-e^{1234} - e^{1256}) - e^{1367} - e^{1457} + e^{2357} - e^{2467}+z(t)^4 e^{3456}.
\end{aligned}
\end{equation*}
Therefore,
$$
\begin{array}{lcl}
\frac{d}{dt} \left (\star_{t}\varphi(t) \right) &= & 2\Big(y(t) z(t)^2 y'(t) +y(t)^2 z(t) z'(t)\Big)(-e^{1234} - e^{1256})+4z(t)^3 z'(t)\,e^{3456},\\[3pt]
\end{array}
$$
that is
\begin{equation}\label{thm-6-3-star}
\begin{aligned}
\frac{d}{dt}\star_t\varphi (t)&=\frac{A\sqrt{6}\big(y(t)^3 z(t)^2-y(t) z(t)^4\big)-1}{3\,y(t)^2 z(t)^4}(e^{1234} + e^{1256})\\ &
-\frac{2A\sqrt{6}\,y(t) z(t)^4+1}{3\,y(t)^2 z(t)^4}e^{3456},
\end{aligned}
\end{equation}
since the functions $y=y(t)$ and $z=z(t)$ satisfy \eqref{ode3}.

On the other hand, let us consider the torsion forms $\tau_i(t)$ $(i=0,1,2,3)$ of $\varphi(t)$. By \eqref{torsion},
$\tau_1(t)=0=\tau_2(t)$ since $d(\star_t\varphi (t))=0$. Then, from \eqref{strcon_grig_3}, \eqref{sol-H-2-2} and using again \eqref{torsion}, we have 
\begin{equation}\label{exp:dif-1-0}
\begin{aligned}
d\varphi (t)&=\frac{\sqrt{6}}{6} y(t)^{-1}z(t)^{-2}\Big(\big(z(t)^{-2}-y(t)^{-2}\big)(f^{1234}+f^{1256}) - 2 z(t)^{-2}f^{3456}\Big)\\
&=\tau_0(t)\star_t\varphi(t)+\star_t\tau_3(t),
\end{aligned}
\end{equation}
where 
\begin{align*}
\begin{array}{lcl}
\tau_3 (t) &=&-\frac{\sqrt{6}\,\big(5y(t)^2+z(t)^2\big)}{21\,y(t)^3 z(t)^4}f^{127}+\frac{\sqrt{6}\,\big(3y(t)^2-5z(t)^2\big)}{42\,y(t)^3z(t)^4}(f^{347}+f^{567})\\&&
+\frac{\sqrt{6}\,\big(2y(t)^2-z(t)^2\big)}{21\,y(t)^3 z(t)^4}(f^{135}-f^{146}+f^{236}+f^{245}),
\end{array}
\end{align*}

\begin{align*}
\begin{array}{lcl}
\star_t\tau_3(t)&=&-\frac{\sqrt{6}\,\big(5y(t)^2+z(t)^2\big)}{21\,y(t)^3 z(t)^4}f^{3456}+\frac{\sqrt{6}\,\big(3y(t)^2-5z(t)^2\big)}{42\, y(t)^3 z(t)^4}(f^{1234}+f^{1256})\\
&&+\frac{\sqrt{6}\,\big(2y(t)^2-z(t)^2\big)}{21\, y(t)^3 z(t)^4}(-f^{1367}-f^{1457}+f^{2357}-f^{2467}),
\end{array}
\end{align*}
and
\begin{eqnarray*}
\tau_0 (t)=- \frac{\sqrt{6}}{21y(t)^3 z(t)^4} \big(2y(t)^2-z(t)^2\big).
\end{eqnarray*}
Then,  according with the first equality of \eqref{exp:dif-1-0},
\begin{eqnarray*}
\begin{array}{lcl}
\Delta_t\star_t\varphi (t)+2d\Big((A-\frac{7}{4}\tau_0)\varphi(t)\Big)&=& d\star_t d(\varphi(t)) +2(A-\frac{7}{4}\tau_0) d\varphi(t) \\&=&
\frac{A\sqrt{6}\,\big(y(t)^3 z(t)^2-y(t) z(t)^4\big)-1}{3\,y(t)^4 z(t)^6}\big(f^{1234}+f^{1256}\big)\\
&& -\frac{2A\,\sqrt{6} y(t) z(t)^4+1}{3\,y(t)^2 z(t)^8} f^{3456},
\end{array}
\end{eqnarray*}
or, equivalently,
\begin{eqnarray*}
\begin{array}{lcl}
\Delta_t\star_t\varphi (t)+2d\Big((A-\frac{7}{4}\tau_0)\varphi(t)\Big)&=&\frac{A\sqrt{6}\,\big(y(t)^3 z(t)^2-y(t) z(t)^4\big)-1}{3\,y(t)^2 z(t)^4}\big(e^{1234}+e^{1256}\big)\\
&& -\frac{2A\,\sqrt{6} y(t) z(t)^4+1}{3\,y(t)^2 z(t)^4} e^{3456}.
\end{array}
\end{eqnarray*}
The last equality, together with  \eqref{exp:trace} and \eqref{thm-6-3-star}, show that \eqref{sol_grig_3} solves the modified Laplacian flow \eqref{modifiedcoflow} 
for $\varphi_2$.

To prove that \eqref{sol_grig_3} is defined on a bounded interval, we will show that
$t_+ =\mathrm{sup}(I) < +\infty$ and $t_-=\mathrm{inf}(I) > - \infty$. On the one hand, we know that the functions 
$y=y(t)$ and $z=z(t)$ are positive. Then, the system \eqref{ode3} implies that $z'(t)<0<y'(t)$, for any $t\in I$.  Therefore, 
the function $z=z(t)$ is decreasing, and $y=y(t)$ is increasing. Thus,  there exist
$${\lim_{t\rightarrow t_-}}y(t)=y_-\in [0,1) \,\, \quad\text{and}\,\,\quad{\lim_{t\rightarrow t_+}}z(t)=z_+\in [0,1).$$
Now, using  \eqref{ode3}, it is straightforward to verify that the function $z'' = z''(t)$ satisfies
\begin{equation*}
\begin{array}{lcl}
z''&=&-\frac{1}{144\,y^6z^{15}}\Big(
24A^2(3y^{4}z^{8}-y^{2}z^{10})+2A\sqrt{6}\,(9y^3z^4-4yz^6)+5y^2-4z^2\Big),\\
\end{array}
\end{equation*}
for any $t\in I$. Note that in the last equality, the functions 
$(3y^{4}z^{8}-y^{2}z^{10})=y^2 z^8(3y^2 - z^2)$, $(9y^3z^4-4yz^6) = y z^4(9 y^2 - 4 z^2)$ and $(5y^2-4z^2)$ are positive functions in $(0,t_+)$.
Indeed, their values at $t=0$ are positive, and $z=z(t)$ decreases while $y=y(t)$ increases in $(0,t_+)$.
Therefore, $z''(t)<0$, for $t\in (0,t_+)$. Thus, $z'(t)<z'(0)<0$, for any $t\in (0,t_+)$. 
 Now, we choose a sequence $\left\{t_n\right\}\subset I$ of positive times converging to $t_+$. Then,  
$$z(t_n)-1=\int_0^{t_n}z'(t)\,dt<\int_0^{t_n}z'(0)\,dt<z'(0)\,t_n.$$
So, $t_n<\frac{z(t_n)-1}{z'(0)}$ and, consequently,  $t_+\leq \frac{z_+-1}{z'(0)}<+\infty$.\par
Using again \eqref{ode3}, we have
\begin{equation}\label{yder}
\begin{aligned}
-144 y^7 z^{16} y'' &= 48A^2(z^{12} y^2 - z^{10} y^4) +2A\sqrt{6}\,(10 z^8 y - 11 z^6 y^3 - 8 z^4 y^5)\\&
+ 12 z^4 - 4 z^2 y^2 -7y^4.\\
\end{aligned}
\end{equation}
Then,  it is possible to show that   $y''(t)<0$ in some neighbourhood of $t_-$.   Indeed, the functions 
 $z^{12} y^2 - z^{10} y^4$ and  $$(12 z^4 - 4 z^2 y^2 - 7 y^4) =  4 z^2 (z^2- y^2)+(8z^4-7y^4)$$ are both positive on $(t_-, 0)$,   
 since   the functions $z^2 - y^2$ and  $8z^4-7y^4$ are both decreasing.
 Moreover,  the  solution  is maximal for $t$ going to $t_-$. Therefore,  the limits $\lim_{t\rightarrow t_-} z(t)=z_-$ and  $\lim_{t\rightarrow t_-} y(t)=y_-$ cannot 
 be both finite and   different from zero, otherwise we can  restart the flow. As a consequence, since $y'(t)>0$ and $z'(t) <0$, for any $t \in I$, 
 we  get  that either $z_- < + \infty$ (and  consequently $y_{-}=0$)  or $z_-=+\infty$.\par
In the first case, the leading term  (as polynomial in $z$) of the right side of \eqref{yder} is $12 z^4$, so it must be positive  
in a  neighbourhood of $t_-$.  On the other hand $ -144 y^7z^{16} <0$, so   $y''(t)<0$ in some neighbourhood of $t_-$. 
In  the other case (i.e. when $z_-=+\infty$),   
$$ \lim_{t \to t_-} (10 z^8  - 11 z^6 y^2- 8 z^4 y^4 )  = + \infty$$ since $z_- = + \infty$ and $y$ is bounded. 
Therefore $y(10 z^8  - 11 z^6 y^2- 8 z^4 y^4 )$ is positive  in some neighbourhood of $t_-$. 
Hence, in both cases,  it follows  that $y''<0 $ for $t\in (\overline{t}, t_{-})$, for some $\overline{t}\in(t_-,0)$, i.e. that  
$y' (t) > y'(\overline{t})$, for  $t\in (t_{-}, \overline{t})$. Now, we choose a sequence of negative times
$\left\{t_n\right\}\subset (t_-,\overline{t})$ converging to $t_-$. Then,
$$
y(\overline{t})-y(t_n)=\int_{t_n}^{\overline{t}} y'(t)\,dt > \int_{t_n}^{\overline{t}}  y'(\overline{t})\,dt =(\overline{t}-t_n)\, y'(\overline{t}).
$$
It follows that $t_{n} > \frac{y(t_n)-y(\overline{t})}{y'(\overline{t})}+\overline{t}$. So,
 $t_{-} \geq \frac{y_{-} - y(\overline{t})}{y'(\overline{t})}+\overline{t} > -\infty$.\par
\end{proof}

\section*{Acknowledgements}

We would like to thank Ernesto Buzano for  very helpful conversations and suggestions. 
The first and third authors are supported by the project FIRB ``Geometria differenziale e teoria geometrica delle funzioni'',
 the project PRIN
\lq\lq  Variet\`a reali e complesse: geometria, topologia e analisi armonica" and by G.N.S.A.G.A. of I.N.d.A.M.
The second author is supported through Project MINECO (Spain) 
PGC2018-098409-B-100 and
Basque Government Project IT1094-16.

\end{document}